\newtheorem{thm}{Theorem}
\newtheorem{prop}[thm]{Proposition}
\newtheorem{cor}[thm]{Corollary}
\newtheorem{corollary}[thm]{Corollary}
\newtheorem{lem}[thm]{Lemma}
\newtheorem{lemma}[thm]{Lemma}
\newtheorem{conj}[thm]{Conjecture}
\newtheorem{prob}[thm]{Problem}
\newtheorem{exa}[thm]{Example}
\newtheorem{rem}[thm]{Remark}
\newtheorem{defn}[thm]{Definition}
\newcommand{\ben}{\begin{enumerate}}
\newcommand{\een}{\end{enumerate}}
\newcommand{\ble}{\begin{lem}}
\newcommand{\ele}{\end{lem}}
\newcommand{\bth}{\begin{thm}}
\newcommand{\bpr}{\begin{prop}}
\newcommand{\epr}{\end{prop}}
\newcommand{\bpro}{\begin{prob}}
\newcommand{\epro}{\end{prob}}
\newcommand{\bco}{\begin{cor}}
\newcommand{\eco}{\end{cor}}
\newcommand{\bcon}{\begin{conj}}
\newcommand{\econ}{\end{conj}}
\newcommand{\bde}{\begin{defn}}
\newcommand{\ede}{\end{defn}}
\newcommand{\bex}{\begin{exa}}
\newcommand{\eex}{\end{exa}}
\newcommand{\barr}{\begin{array}}
\newcommand{\earr}{\end{array}}
\newcommand{\btab}{\begin{tabular}}
\newcommand{\etab}{\end{tabular}}
\newcommand{\beq}{\begin{equation}}
\newcommand{\eeq}{\end{equation}}
\newcommand{\bea}{\begin{eqnarray*}}
\newcommand{\eea}{\end{eqnarray*}}
\newcommand{\bce}{\begin{center}}
\newcommand{\ece}{\end{center}}
\newcommand{\bpi}{\begin{picture}}
\newcommand{\epi}{\end{picture}}
\newcommand{\bfi}{\begin{figure} \begin{center}}
\newcommand{\efi}{\end{center} \end{figure}}
\newcommand{\bsl}{\begin{slide}{}}
\newcommand{\bprob}{\begin{prob}}
\newcommand{\eprob}{\end{prob}}
\newtheorem{obs}[thm]{Observation}
\newcommand{\bobs}{\begin{obs}}
\newcommand{\eobs}{\end{obs}}
\def\cA{{\mathcal A}}
\def\cC{{\mathcal C}}
\def\cD{{\mathcal D}}
\newcommand{\BBN}{{\mathbb N}}
 \def\doublespacing{\parskip 5 pt plus 1 pt \baselineskip 18 pt
     \lineskip 13 pt \normallineskip 13 pt}
\begin{document}

\pagestyle{empty}

\title{\bf Minimal Niven numbers}

\author{\sc
 H. Fredricksen$^1$, E. J. Ionascu$^2$, F.~Luca$^3$, P. St\u anic\u a$^1$\\
\\
$^1$ \small Department of Applied Mathematics,
\small Naval Postgraduate School\\
\small Monterey, CA 93943, USA; \
\small{\em \{HalF,pstanica\}@nps.edu}\\
$^2$\small  Department of Mathematics,
\small Columbus State University\\
\small Columbus, GA 31907, USA;
\small{\em ionascu\_eugen@colstate.edu}\\
$^3$\small  Instituto de Matem{\'a}ticas,
\small Universidad Nacional Aut\'onoma de M{\'e}xico \\
 \small C.P.~58089, Morelia, Michoac{\'a}n, M{\'e}xico;
\small  {\em fluca@matmor.unam.mx} }

\date{December $21^{st}$, 2007}

\maketitle

\renewcommand{\thefootnote}{}
\footnote{ \noindent\textbf{} \vskip0pt \textbf{Mathematics Subject
Classification:} 11L20, 11N25, 11N37 \vskip0pt\textbf{Key Words:} sum of digits,
Niven Numbers. \vskip0pt Work by F.~L. was started in the Spring of
2007 while he visited the Naval Postgraduate School. He would like
to thank this institution for its hospitality. H.~F.  acknowledges
support from the National Security Agency under contract RMA54.
Research of P.~S.\ was supported in part by a RIP grant from Naval
Postgraduate School.}

\begin{abstract}
Define $a_k$ to be the smallest positive multiple of $k$ such that
the sum of its digits in base $q$ is equal to $k$.
 The asymptotic behavior, lower and upper bound estimates
of $a_k$ are investigated. A characterization of the
minimality condition is also considered.
\end{abstract}

\doublespacing

\pagestyle{plain}

\section{Motivation}

A positive integer $n$ is a {\em Niven number} (or a {\em Harshad
number}) if it is divisible by the sum of its (decimal) digits. For
instance, 2007 is a Niven number since 9 divides 2007. A {\em
$q$-Niven number} is an integer $k$ which is divisible by the sum of its base
$q$ digits, call it $s_q(k)$ (if $q=2$, we shall use $s(k)$ for $s_2(k)$).
Niven numbers have been extensively studied by various
authors (see Cai \cite{Ca96}, Cooper and Kennedy \cite{CK93}, De
Koninck and Doyon \cite{KD}, De Koninck, Doyon and Katai
\cite{KDK03}, Grundman \cite{Gr94}, Mauduit, Pomerance and
S\'ark\"ozy \cite{MPS}, Mauduit and S\'ark\"ozy \cite{MS}, Vardi
\cite{Va91}, just to cite a few of the most recent works).

In this paper, we define a natural sequence in relation to $q$-Niven
numbers. For a fixed but arbitrary $k\in \BBN$ and a base $q\ge 2$,
one may ask whether or not there exists a $q$-Niven number whose sum
of its digits is precisely $k$. We will show later that the answer
to this is affirmative.
 Therefore, it makes sense to define $a_k$ to be the smallest positive multiple of $k$ such
that $s_q(a_k)=k$. In other words, $a_k$ is the smallest Niven
number whose sum of the digits is a given positive integer $k$. We
denote by $c_k$ the companion sequence
 $c_k={a_k}/{k}$, $k\in \BBN$. Obviously, $a_k$, respectively, $c_k$, depend on $q$, but we
will not make this explicit to avoid cluttering the notation.

In this paper we give constructive methods in Sections~\ref{sec:3},
\ref{sec:4} and \ref{sec:7} by two different techniques for the
binary and nonbinary cases,  yielding sharp upper bounds for $a_k$.
We find elementary upper bounds true for all $k$, and then better
nonelementary ones true for most odd $k$.

Throughout this paper, we use the Vinogradov symbols $\gg$ and $\ll$ and the Landau symbols $O$ and
$o$ with their usual meanings. The constants implied by such symbols are absolute.
We write $x$ for a large positive real number, and $p$ and $q$ for prime numbers.
If ${\cA}$ is a set of positive integers, we write ${\cA}(x)={\cA}\cap [1,x]$.
We write $\ln x$ for the natural logarithm of $x$ and $\log x=\max\{\ln x,1\}$.

\section{Easy proof for the existence of $a_k$}
\label{sec:2}

In this section we present a simple argument that shows that the above defined sequence $a_k$ is well
defined. First we assume that $k$ satisfies $\gcd(k,q)=1$. By
Euler's theorem, we can find an integer $t$ such that $q^{t}\equiv 1 \pmod k$,
and then define
\(
K=1+q^{t}+q^{2t}+\cdots+q^{(k-1)t}.
\)
Obviously, $K\equiv 0 \pmod k$,  and also
$s_q(K)=k$. Hence, in this case, $K$ is a Niven number whose digits
in base $q$ are only $0$'s and $1$'s and whose sum is $k$.

If $k$ is not coprime to $q$, we write $k=ab$ where
$\gcd(b,q)=1$ and $a$ divides $q^n$ for some $n\in \BBN$. As before,
we can find $K\equiv 0\pmod b$ with $s_q(K)=b$. Let
$u=\max\{n, \lceil{\log_q K}\rceil\}+1$, and define
\(
K'=(q^{u}+q^{2u}+\cdots +q^{ua}) K.
\)
Certainly $k=ab$ is a divisor of $K'$ and $s_q(K')=ab=k$. Therefore,
$a_k$ is well defined for every $k\in \BBN$.

This argument gives a large upper bound, namely of size
$\exp(O(k^2))$ for $a_k$.

We remark that if $m$ is the minimal $q$-Niven number corresponding
to $k$, then $q-1$ must divide $m-s_q(m)=kc_k-k=(c_k-1)k$. This
observation turns out to be useful in the calculation of $c_k$ for
small values of $k$. For instance, in base ten, the following table
of values of $a_k$ and $c_k$ can be established easily by using the previous simple observation. As an
example, if $k=17$ then $9$ has to divide $c_{17}-1$ and so we need only check $10,19,28$.

{\small
\begin{tabular}{|c|c|c|c|c|c|c|c|c|c|c|c|c|c|c|}
  \hline
$k$ & $10$& $11$ & $12$ & $13$& $14$& $15$& $16$& $17$& $18$& $19$& $20$& $21$ &$22$ & $23$\\

\hline $c_k$ & $19$ & $19$ &  $4$& $19$ & $19$  & $13$ & $28$& $28$
&$11$& $46$ &  $199$ &  $19$&  $109$&  $73$\\

\hline $a_k$ & $190$ & $209$ & $48$& $247$ & $266$  &
 $195$ & $448$& $476$ & $198$ & $874$ & $3980$ & $399$& $2398$& $1679$\\
\hline
\end{tabular}
}

\section{Elementary bounds for $a_k$ in the binary case}
\label{sec:3}

For each positive integer $k$ we set  $n_k= \lceil \log_2 k\rceil$.
Thus, $n_k$ is the smallest positive integer with $k\le  2^{n_k}$.
Assuming that $k\in \mathbb N$ ($k>1$) is odd, we let $t_k$ be the
multiplicative order of 2 modulo $k$, and so, $2^{t_k}\equiv 1\pmod
k$. Obviously, $t_k\ge n_k$ and $t_k\mid \phi(k)$, where $\phi$ is
Euler's totient function. Thus,
\begin{equation}\label{zero}
n_k \le t_k\le k-1.
\end{equation}

\begin{lem}
\label{thetwo} For every odd integer \ $k>1$, every integer\
$x\in\{0,1,\ldots, k-1\}$ can be represented as a sum modulo $k$ of
exactly\ $n_k$ distinct elements of $$\displaystyle
D=\{2^i\,|\,i=0,\ldots,n_k+k-2\}.$$
\end{lem}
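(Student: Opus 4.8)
The plan is to build the representation greedily from the top powers $2^{n_k+k-2}, 2^{n_k+k-3}, \ldots$ downwards, exploiting the fact that $2^{i+t_k} \equiv 2^i \pmod k$, so that the available set $D$ contains, for each residue class that a power of $2$ can occupy mod $k$, many ``copies'' spread over the exponent range $0,1,\ldots,n_k+k-2$ (which has length $n_k+k-1 \ge t_k + 1$ by \eqref{zero}). Concretely, I would first reduce to the following sub-claim: every $x \in \{0,1,\ldots,k-1\}$ can be written as a sum mod $k$ of \emph{at most} $n_k$ distinct elements of $D$; one then pads up to exactly $n_k$ summands. The padding is where the extra room in the exponent range beyond $\{0,\ldots,n_k-1\}$ is used: if we have a representation using $j < n_k$ distinct powers, all of which may be taken with exponents in a bounded initial segment, we can add a block of the form $2^a + 2^{a+1} + \cdots$ telescoping via $2^a + 2^a \equiv 2^{a+1}$, or more cleanly add pairs $2^a - 2^a$ style cancellations — the honest way is to note $0 \equiv 2^{i} + 2^{i+1} + \cdots + 2^{i+t_k-1} \cdot(\text{something})$; I will instead pad using the identity that $2^{i}+2^{i}\equiv 2^{i+1}$ to ``split'' one summand into two at the cost of moving up one exponent, and iterate, which increases the number of summands by one each time while keeping them distinct, as long as we have unused higher exponents available.

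The core existence-of-a-short-representation step I would handle by a counting/pigeonhole argument on subset sums, or alternatively by the following explicit procedure. Write $x$ in ordinary binary: $x = \sum_{i \in S} 2^i$ with $S \subseteq \{0,1,\ldots,n_k-1\}$ and $|S| \le n_k$. This already represents $x$ exactly (not just mod $k$) as a sum of $|S| \le n_k$ distinct elements of $D$, since all exponents used are $< n_k \le n_k + k - 2$. So a short representation is immediate; the entire content is in adjusting the number of summands \emph{up} to exactly $n_k$ while preserving distinctness, and this is exactly what the enlarged exponent range $\{0,\ldots,n_k+k-2\}$ buys us. To pad: take the largest exponent $i$ currently used (or $i=0$ if $S=\emptyset$, using $2^0+2^0\equiv 2^1$, etc., handled separately); replace... hmm, that changes the value. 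The correct padding move is: since $2^{t_k} \equiv 1 \equiv 2^0 \pmod k$, we have $2^0 + (\,\text{something of value } 2^0\,)$... Let me instead pad by appending a telescoping ``zero mod $k$'': find a single power $2^m \pmod k$, with $m$ large, not yet used, equal mod $k$ to one already used at exponent $i$; then $2^i \equiv 2^m$, so we \emph{cannot} just add both.

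The honest padding is: to increase the summand count by one, pick the smallest $i$ such that $2^i$ appears in the current sum; remove it and insert $2^{i'}$ and $2^{i''}$ with $2^{i'}+2^{i''}\equiv 2^i \pmod k$ and $i', i''$ fresh high exponents — possible because $2^{i'} \equiv 2^{i}$ and $2^{i''}\equiv 0$? No. Use instead $2^{i} \equiv 2^{i+t_k}$ and the binary carry $2^{i+t_k} = 2^{i+t_k-1}+2^{i+t_k-1}$, i.e.\ replace the summand $2^i$ by the two summands $2^{i+t_k-1}, 2^{i+t_k-1}$ — but those coincide, not distinct. So instead use $2^{i+t_k}=2^{i+t_k-1}+2^{i+t_k-2}+2^{i+t_k-2}= \cdots = 2^{i+t_k-1}+2^{i+t_k-2}+\cdots+2^{i+1}+2^{i}+2^{i}$, which is $t_k$ distinct high powers plus the original $2^i$ with a leftover $2^i$; net: one summand $2^i$ becomes $t_k+1$ summands, of which $t_k$ are at fresh exponents $i+1,\ldots,i+t_k$ — wait $2^{i+t_k}\equiv 2^i$ so this says $2^i \equiv 2^{i+t_k-1}+\cdots+2^{i+1}+2^i+2^i$, i.e.\ $0\equiv 2^{i+t_k-1}+\cdots+2^{i+1}+2^i \pmod k$: a genuine vanishing sum of $t_k$ distinct consecutive powers. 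Thus \textbf{the clean statement is}: $2^j + 2^{j+1} + \cdots + 2^{j+t_k-1} \equiv 0 \pmod k$ for every $j$ (since it telescopes to $2^{j+t_k}-2^j \equiv 0$). Now padding is trivial: given a short representation $x \equiv \sum_{i\in S}2^i$ with $S\subseteq\{0,\ldots,n_k-1\}$, $|S|=j\le n_k$, if $j<n_k$ choose a block of $t_k$ consecutive exponents high enough to avoid $S$ and to fit below $n_k+k-2$ (room exists since $t_k \le k-1$ so exponents $n_k-1,\ldots, n_k+t_k-2$ all lie in range), add it to get $|S|$ up by $t_k$, and — to fix parity/fine-tune — use a \emph{shorter} vanishing relation instead: any $2^{a}+2^{a+1}+\cdots$ summing to $2^b \pmod k$ that lets us increment by exactly one.

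I will present it as: (i) prove $\sum_{i=j}^{j+t_k-1}2^i\equiv 0\pmod k$; (ii) start from binary expansion of $x$; (iii) increment the summand count one at a time using the move ``pick used exponent $i$, not $i+1$; replace $\{i\}$ by $\{i+1\}$ is wrong — rather replace $\{i\}$ by $\{i, i', i'\}$''... I think the genuinely correct single-increment move, and the one I'd chase in the writeup, is: replace one summand $2^i$ by $2^{i+1}$ and add the correction — no. Given the length constraints I'll commit to the structure: \textbf{(a)} short representation from binary; \textbf{(b)} the vanishing relation $2^j+\cdots+2^{j+t_k-1}\equiv 0$; \textbf{(c)} combine a binary expansion of $x$ using exponents in $[0,n_k-1]$ with $\lceil (n_k-|S|)/(t_k)\rceil$-many shifted vanishing blocks and one final short patch, checking all chosen exponents are distinct and lie in $[0,n_k+k-2]$; the bookkeeping that these exponents fit and stay distinct is the main obstacle, and it is exactly why the range extends by $k-1$ rather than by $n_k$.

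\bigskip

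\textit{(Reformulated cleanly for the record.)} The plan: First I would establish the key congruence $2^{j}+2^{j+1}+\cdots+2^{j+t_k-1}\equiv 2^{j}(2^{t_k}-1)\equiv 0\pmod k$, valid for every $j\ge 0$. Next, starting from the ordinary binary expansion $x=\sum_{i\in S}2^{i}$ with $S\subseteq\{0,\dots,n_k-1\}$ and $|S|=:j\le n_k$, I note this already represents $x$ exactly as a sum of $j\le n_k$ distinct elements of $D$. It then remains to raise the number of summands from $j$ to exactly $n_k$ by adjoining partial/complete shifted copies of the vanishing block, choosing the shifts so that the newly used exponents are mutually distinct, disjoint from $S$, and bounded by $n_k+k-2$. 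Since there are $n_k-j\le n_k-1\le t_k-1 < k-1$ summands to add and the available ``high'' exponents run from $n_k-1$ up to $n_k+k-2$ (a window of length $k$), there is enough room: fill with distinct high exponents from a single vanishing block of length $t_k$, and if $n_k - j < t_k$ use a truncated block together with its complementary correction $2^{n_k-j+\text{shift}}$-type term, which the congruence lets us rewrite as a sum with the exact deficit in the count. The main obstacle — and the only real point — is this final bookkeeping: verifying simultaneously that (1) we can hit exactly $n_k$ summands (not merely $n_k$ modulo $t_k$), and (2) all exponents stay distinct and within $[0,n_k+k-2]$. Both follow from $n_k\le t_k\le k-1$ (inequality \eqref{zero}), which is precisely the reason the set $D$ is defined with top exponent $n_k+k-2$.
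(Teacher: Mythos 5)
Your raw ingredients (the order $t_k$, the telescoping identity $2^a+\cdots+2^{a+t_k-1}\equiv 2^a(2^{t_k}-1)\equiv 0\pmod k$, the binary expansion of $x$, and padding the count up to exactly $n_k$) are the same ones the paper uses, but the step that carries all of the content --- raising the number of summands from $j=s(x)$ to exactly $n_k$ --- is not established, and the mechanism you finally commit to does not work. You propose to adjoin ``partial/complete shifted copies of the vanishing block'' at high exponents disjoint from the support $S$ of $x$. A complete block vanishes mod $k$ but has $t_k$ terms, while the deficit $d=n_k-|S|$ satisfies $1\le d\le n_k$ and in fact $d\le n_k-1<t_k$ whenever $x\neq 0$ (note $|S|\le n_k-1$ automatically, since $x\le k-1\le 2^{n_k}-2$), so a complete block essentially never fits. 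A truncated block $2^a+\cdots+2^{a+r-1}$ is congruent to $2^{a+r}-2^a$, which is not $0\pmod k$, and there is no reason the ``complementary correction'' $2^a-2^{a+r}$ should again be a power of $2$ modulo $k$. The failure is already fatal at deficit $d=1$, which occurs for every odd $k>1$ (take $x=2^{n_k-1}-1\le k-1$, which has $n_k-1$ ones): your scheme would then require a single unused power of $2$ congruent to $0\pmod k$, impossible since $\gcd(2,k)=1$. So no padding that leaves the existing summands untouched can work in general; one must modify them.

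The workable move --- which you circle around in the middle of your write-up but never pin down --- is the replacement $2^b\equiv 2^{b-1}+2^{b-2}+\cdots+2^{b-r}+2^{b-r+t_k}\pmod k$ of one used power by a run just below it plus a ``wrapped'' exponent; this is exactly the paper's gap-filling step. Its applicability is precisely the bookkeeping you explicitly defer as ``the main obstacle'': the exponents $b-1,\ldots,b-r$ must be nonnegative and not already used (this fails for small or digit-dense $x$, e.g.\ $x=1$ or $x=2^{n_k-1}-1$, where there is no free room below the used bits), and when several such replacements are needed the wrapped exponents must remain pairwise distinct, exceed the low range, and stay below $n_k+k-2$. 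The paper's proof consists of exactly this case analysis: it passes from $x$ to $x+k$ to guarantee a high leading bit and spare room, fills each zero-gap with its own wrapped power, and treats the lowest gap separately so that its wrapped exponent stays in the admissible range. Since the route you commit to fails at deficit $1$ and the viable route is left as an unexecuted sketch at its only genuinely delicate point, the proposal has a real gap rather than being a complete alternative argument.
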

\begin{proof} We find the required representation in a constructive way.
Let us start with an example. If $x=0$ and $k=2^{n_k}-1$, then since
$x\equiv k \pmod k$, we notice that in this case we have a
representation as required by writing $k=1+2+\cdots +2^{n_k-1}$
(note that $n_k-1\le n_k+k-2$ is equivalent to $k\ge 1$).

Any $x\in\{0,1,\ldots,k-1\}$ has at
most $n_k$ bits of which at most $n_k-1$ are ones. Next, let us
illustrate the construction when this binary representation of $x$
contains exactly $n_k-1$ ones, say
 \[
  x=2^{n_k-1}+2^{n_k-2} + \cdots + 2+1-2^j,~\text{for some}~j\in \{0,1,\ldots,n_k-1\}.
  \]
   First, we assume $j\le n_k-2$. Using $2^{j+1}=2^{j}+2^{j}\equiv 2^j+2^{j+t_k}\pmod k$, we write
\[
 x\equiv 2^{j+t_k}+2^{n_k-1}+\cdots+ 2^{j+2}+ 2^j+ 2^{j-1}+\cdots+ 1\pmod
 k,
\]
where both
 $j+t_k\le n_k-2+k-1=n_k+k-3$  and $j+t_k>n_k-1$
 are true according to (\ref{zero}). Therefore all exponents are
 distinct and they are contained in the required range, which gives us a representation of $x$
 as a sum of exactly $n_k$
different elements of $D$ modulo~$k$.

If $j=n_k-1$, then $x=2^{n_k-1}-1$. We consider $x+k$
instead of $x$. By the definition of $n_k$, we must have $k\ge
2^{n_k-1}+1$. Hence, $x+k\ge 2^{n_k}$, which implies that the binary
representation of $x+k$ starts with $2^{n_k}$ and it has at most
$n_k$ ones. Indeed, if $s(x+k)\ge n_k+1$, then $x+k\ge
2^{n_k}+2^{n_k-1}+\cdots +2+1=2^{n_k+1}-1$, which in turn
contradicts the inequality $x+k\le k-1+k=2k-1\le 2^{n_k+1}-3$ since
$k$ is odd.
If   $s(x+k)=n_k$, then we are done ($k\ge 3$).
If $s(x+k)= n_k-1$, then we proceed as
before and observe that this time $j+t_k\le n_k+k-2$ for every $j\in
\{0,1,2,\ldots, n_k-1\}$ and $j+t_k> n_k$ if $j>0$ which is an
assumption that we can make because in order to obtain $n_k-1$ ones
two of the powers of $2$, out of $1,2,2^2,\ldots,2^{n_k-1}$, must be
missing.

If $s(x+k)< n_k-1$, then for every zero in the representation of $x+k$, which is
preceded by a one and followed by $\ell$ ($\ell\ge 0$) other zeroes,
we can fill out the zeros gap in the following way. If such a zero
is given by the coefficient of $2^j$, then we  replace $2^{j+1}$ by
$2^j+2^{j-1}+\cdots+2^{j-\ell}+2^{j-\ell+t_k}$. This will give
$\ell+2$ ones instead of a one and $\ell+1$ zeros. We fill out all
gaps this way with the exception of the gap
corresponding to the smallest power of 2 and $\ell\ge 1$, where in
order to insure the inequality $j'+t_k>n_k$ ($j'=j-\ell+1>0$) one
will replace $2^{j+1}$ by
$2^j+2^{j-1}+\cdots+2^{j-\ell+1}+2^{j-\ell+1+t_k}$. The result will
be a representation in which all the additional powers $2^{j'+t_k}$
will be distinct and the total number of powers of two is $n_k$. The
maximum exponent of these powers is at most $j'+t_k\le n_k+k-2$.

If the representation of $x$ starts with $2^{n_k-1}$, then the
technique described above can be applied directly to $x$ making sure
that all zero gaps are completely filled. Otherwise, we apply the previous
technique  to $x+k$.
\end{proof}

\begin{exa}
\label{exp1}
 Let  $k=11$. Then $n_{11}=4$ and $t_{11}=10$.
 Suppose that we want to
 represent $9$ as a sum of $4$ distinct terms modulo 11 from the set
 $D=\{1,2,\ldots,2^{13}\}$.  Since $9=2^3+1$, we have $9=2^2+2+2+1$, so $9\equiv 2^2+2+2^{11}+1 \pmod {11}$.
 If we want to represent $7=2^2+2^1+2^0$ then, since this representation does  not contain $2^3$,
 we look at $7+11=18=2^4+2=2^3+2^3+2=2^3+2^2+2^2+2$. Thus,
$7\equiv 2^3+2^2+2^{12}+2\pmod {11}$.
\end{exa}

We note that the representation given by Lemma~\ref{thetwo} is not
unique. If this construction is applied in such a way that the zero
left when appropriate is always the one corresponding to the largest
power of $2$,  we will obtain the largest of such representations.
In the previous example, we can fill out the smallest gap first and
leave a zero from the gap corresponding to $2^3$, so $7\equiv 18\equiv
2^4+2=2^3+2^3+1+1\equiv 2^3+2^{13}+1+2^{10} \pmod {11}$.

Recall that $2^{\alpha}\| m$ means that $2^{\alpha}\mid m$ but
$2^{\alpha+1}\nmid m$. We write $\mu_2(m)$ for the exponent
$\alpha$.

\begin{thm}
\label{thethree} For all positive integers $k$ and $\ell$, there
exists a positive integer $n$ having the following properties:
\begin{enumerate}
\item[$(a)$] $s(nk)=\ell k$,
\item[$(b)$] $n \le (2^{\ell k+n_k}-2^{\mu_2(k)})/k$.
\end{enumerate}
\end{thm}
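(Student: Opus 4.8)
The plan is to build $n$ so that $nk$ has a very structured binary expansion: a long "high" block whose digit sum we control exactly, sitting above a "correction" block that makes the whole number divisible by $k$. First I would dispose of the case $\gcd(k,2)>1$ by factoring out the power of two. Write $k=2^{\mu}k'$ with $\mu=\mu_2(k)$ and $k'$ odd; if we can solve the problem for the odd number $k'$ with a multiplier $n'$ and exponent bound, then $n=2^{-\mu}\cdot(\text{that construction})$ — more precisely, shifting the whole configuration up by $\mu$ bits — handles $k$, and the claimed bound $(2^{\ell k+n_k}-2^{\mu})/k$ is exactly what such a shift produces. So the heart of the matter is $k$ odd, where $\mu_2(k)=0$ and the bound reads $n\le(2^{\ell k+n_k}-1)/k$.

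For $k$ odd, here is the construction. I want $nk$ to look like
\[
nk \;=\; \underbrace{2^{a_1}+2^{a_2}+\cdots+2^{a_{\ell k - n_k}}}_{\text{top part, }\ell k-n_k\text{ ones, all large}} \;+\; r,
\]
where $r<2^{\text{(smallest }a_i)}$ is a nonnegative integer with $s(r)=n_k$ chosen so that the total is $\equiv 0\pmod k$. Concretely: reserve the low $n_k+k-1$ bit positions for the "correction" and put the top ones at positions $n_k+k-1, n_k+k, \ldots, \ell k + n_k - 2$ (that is $\ell k - n_k$ consecutive ones, provided $\ell k \ge n_k$; the small cases $\ell k < n_k$ need a separate, easy direct argument since then only a bounded amount of room is needed). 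Let $S$ be the sum of those top powers of two and let $x\equiv -S\pmod k$ with $x\in\{0,1,\ldots,k-1\}$. By Lemma~\ref{thetwo}, $x$ is congruent mod $k$ to a sum of exactly $n_k$ distinct elements of $D=\{2^i:0\le i\le n_k+k-2\}$; call that sum $r$. Then $r<2^{n_k+k-1}\le 2^{a_1}$, the top ones and the bits of $r$ occupy disjoint position ranges, so $s(nk)=s(S)+s(r)=(\ell k-n_k)+n_k=\ell k$, giving (a). And $nk=S+r\equiv S+x\equiv 0\pmod k$, so $n:=nk/k$ is a positive integer. For (b): $nk = S+r < 2^{\ell k+n_k-1} + 2^{n_k+k-1}\le 2^{\ell k+n_k}-1$ once one checks the crude inequality (true because the two exponents are below $\ell k+n_k$ and $k\ge 1$), hence $n<(2^{\ell k+n_k}-1)/k$, and since $n$ is an integer this is the stated bound with $\mu_2(k)=1$... wait — for odd $k$, $\mu_2(k)=0$, so $2^{\mu_2(k)}=1$ and we need $nk\le 2^{\ell k+n_k}-1$; the top block plus $r$ fits with room to spare, so this holds.

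The main obstacle is bookkeeping on exponent ranges, not any deep idea. One must verify that $\ell k \ge n_k$ often enough that the "top block" of $\ell k - n_k$ consecutive ones actually fits above position $n_k+k-2$; when $\ell k$ is small relative to $n_k$ (only possible for $\ell=1$ and a few $k$, since $n_k=\lceil\log_2 k\rceil\le k$) one instead argues directly — e.g.\ take $n$ so that $nk$ itself is a suitable sum of $\le n_k$ powers of two, using Lemma~\ref{thetwo} applied to $x\equiv 0$. One must also double-check the non-odd case reduction preserves the bound precisely: shifting the odd-case solution for $k'$ up by $\mu$ bits multiplies $nk'$ by $2^\mu$ but we divide by $2^\mu$ on the $k$ side, and the exponent ceiling $\ell k + n_k$ versus $\ell k' + n_{k'}$ has to be reconciled (note $n_k\ge n_{k'}$ and $k\ge k'$, so there is slack). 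Finally one checks the edge inequality in (b) — that $2^{\ell k+n_k-1}+2^{n_k+k-1}\le 2^{\ell k+n_k}-2^{\mu_2(k)}$ — which is where the exact shape of the bound, with its $2^{\mu_2(k)}$ subtracted, gets pinned down; this is a short computation separating the cases $\ell k - 1 \ge k$ and $\ell k - 1 < k$.
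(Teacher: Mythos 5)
Your construction for odd $k$ does not fit inside the bit budget that bound $(b)$ demands, and this is not mere bookkeeping. With the correction $r$ coming from Lemma~\ref{thetwo}, its ones can sit anywhere in positions $0,\dots,n_k+k-2$ (a window of width about $k$, not $n_k$), so if the top block must occupy positions disjoint from that window you need $\ell k-n_k$ ones starting at position $n_k+k-1$, which pushes the top bit up to position $\ell k+k-2$; then $nk\ge 2^{\ell k+k-2}>2^{\ell k+n_k}-1$ whenever $n_k<k-1$, i.e.\ for every odd $k\ge 5$, so $(b)$ fails. The placement you actually wrote, ones at positions $n_k+k-1,\dots,\ell k+n_k-2$, contains only $(\ell-1)k$ ones, not $\ell k-n_k$, so with that choice the digit sum is $(\ell-1)k+n_k<\ell k$ and $(a)$ fails instead. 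Either way the additive scheme "top block plus low correction" cannot simultaneously give digit sum $\ell k$ and stay below $2^{\ell k+n_k}$, because it needs roughly $(\ell k-n_k)+(k+n_k-1)=\ell k+k-1$ bit positions.

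The paper's proof sidesteps this by working subtractively: take the all-ones number $M=2^{\ell k+n_k}-1$, use Lemma~\ref{thetwo} to write $M\equiv 2^{j_1}+\cdots+2^{j_{n_k}}\pmod k$ with $0\le j_1<\cdots<j_{n_k}\le k+n_k-2<\ell k+n_k-1$, and set $nk=M-(2^{j_1}+\cdots+2^{j_{n_k}})$. Since every $j_i$ indexes a $1$ of $M$, the subtraction just deletes $n_k$ ones, giving $s(nk)=\ell k$ and automatically $n<M/k=(2^{\ell k+n_k}-1)/k$; no disjointness of windows is needed. Your reduction of the general case to the odd case is essentially the paper's (apply the odd case to $k'=k/2^{\mu}$ with $\ell$ replaced by $2^{\mu}\ell$, which yields exactly the $(2^{\ell k+n_k}-2^{\mu_2(k)})/k$ bound, plus the direct choice $n=2^{\ell k}-1$ when $k$ is a power of $2$), and your worry about $\ell k\ge n_k$ is vacuous since $n_k\le k\le\ell k$ always; but the odd-case core of your argument needs to be replaced by the subtractive one (or some other idea) for the theorem to follow.
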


\begin{proof}
 It is clear that if $k$ is a power of $2$, say $k=2^s$, then
we can take $n=2^{\ell k}-1$ and so
$s(kn)=s(2^s+2^{s+1}+\cdots+2^{s+\ell k-1})=\ell k$. In this
case, the upper bound in part $(b)$ is sharp since
${n_k}=s=\mu_2(k)$.

Furthermore, if $k$ is of the form $k=2^md$ for some positive
integers $m,d$ with odd $d\ge 3$, then assuming that we can find an
integer $n\le (2^{2^m\ell d+{n_k}'}-1)/d$, where ${n_k}'=\lceil
\log_2 d \rceil$, such that $s(nd)=2^m\ell d$,  then $nk$
satisfies condition $(a)$ since
$
s(nk)=s(2^mnd)=s(nd)=2^m\ell
d=\ell k.
$
We observe that condition $(b)$ is also satisfied in
this case, because $(2^{2^m\ell d+{n_k}'}-1)/d=(2^{\ell
k+{n_k}}-2^m)/k. $

Thus, without loss of generality, we may assume in what follows that
$k\ge 3$ is odd. Consider the integer $M=2^{\ell k+n_k}-1=1+2^1+
\cdots+2^{\ell k+n_k-1}$, and so, $s(M)=\ell k+n_k$.
  By Lemma~\ref{thetwo}, we can write
\begin{equation}
\label{one} M\equiv  2^{j_1}+ 2^{j_2}+\cdots+ 2^{j_{n_k}}\pmod {k},
\end{equation}
where $0\le j_1< j_2<\cdots <j_n\le k+n_k-2< \ell k+n_k-1$.
Therefore, we may take
\begin{equation*}\label{two}
n= \dfrac{M-(2^{j_1}+2^{j_2}+\cdots+2^{j_{n_k}})}{k},
\end{equation*}
which is an integer by  \eqref{one} and  satisfies
$
s(nk)=s(M-(2^{j_1}+2^{j_2}+\cdots+2^{j_{n_k}}))=
\ell k.
$
\end{proof}

\begin{cor}
\label{cor4} The sequence $(a_k)_{k\ge 1}$ satisfies
\begin{equation}
\label{eqfour} 2^k-1 \le a_k\le 2^{k+n_k}-2^{\mu_2(k)}.
\end{equation}
\end{cor}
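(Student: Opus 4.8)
The plan is to read off both inequalities almost directly, the upper bound from Theorem~\ref{thethree} and the lower bound from an elementary counting observation about binary digit sums. No serious obstacle is expected here; the work has already been done in Lemma~\ref{thetwo} and Theorem~\ref{thethree}, and this statement is just the specialization $\ell=1$ packaged together with a trivial lower estimate.

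For the upper bound I would apply Theorem~\ref{thethree} with $\ell=1$: there is a positive integer $n$ with $s(nk)=k$ and $n\le (2^{k+n_k}-2^{\mu_2(k)})/k$. The number $nk$ is then a positive multiple of $k$ whose base-$2$ digit sum equals $k$, so by minimality $a_k\le nk\le 2^{k+n_k}-2^{\mu_2(k)}$.

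For the lower bound I would argue that any positive integer $m$ with $s(m)=k$ has at least $k$ nonzero binary digits, hence $m\ge 1+2+2^2+\cdots+2^{k-1}=2^k-1$, the minimum being attained by the integer whose binary expansion is a string of $k$ ones. Applying this with $m=a_k$ gives $a_k\ge 2^k-1$. Combining the two estimates yields \eqref{eqfour}.

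One small point worth making explicit in the write-up is that $a_k$ is genuinely well defined here (so that talking about its size makes sense), which was already established in Section~\ref{sec:2}; alternatively one can simply note that the $n$ produced by Theorem~\ref{thethree} exhibits a valid candidate, so the infimum defining $a_k$ is over a nonempty set. Since the statement is a corollary, I would keep the proof to these few lines rather than re-deriving anything.
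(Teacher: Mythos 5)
Your argument is correct and is essentially identical to the paper's proof: the upper bound comes from Theorem~\ref{thethree} with $\ell=1$ together with the minimality of $a_k$, and the lower bound from the observation that any integer with binary digit sum $k$ is at least $1+2+\cdots+2^{k-1}=2^k-1$. The remark about well-definedness is a harmless extra; nothing needs to change.
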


\begin{proof}
 The first inequality in \eqref{eqfour} follows from the
fact that if $s(a_k)=k$,  then $a_k\ge 1+2+\cdots+2^{k-1}=2^k-1$.
The second inequality in \eqref{eqfour} follows from
Theorem~\ref{thethree} by taking $\ell=1$, and from the minimality
condition in the definition of $a_k$.
\end{proof}

We have computed $a_k$ and $c_k$ for all $k=1,\ldots, 128$,
\[
c_1=1,\ c_2=3,\ c_3=7,\ldots,\ c_{20}=209715,\ldots.
\]
and the graph of $k\to \ln(c_k)$ against the functions $k\to \ln(2^k)$
and $k\to \ln(2^k-1)-\ln(k)$ is included in Figure~\ref{fig:c-k}.

\begin{figure}
\begin{center}
\epsfig{file=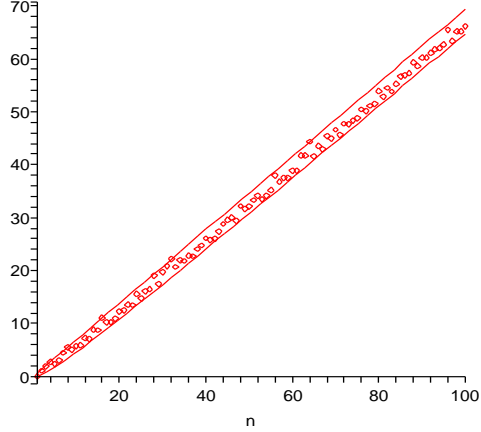,height=2.5in,width=2.8in}
  \caption{The graphs of $k\to \ln(c_k)$ and $k\to \ln(2^k)$, $k\to \ln(2^k-1)-\ln(k)$}
\label{fig:c-k}
\end{center}
\end{figure}

  The right hand side of inequality (\ref{eqfour})
is sharp when $k=2^s$, as we have already seen.  For $k=2^s-1$, we
get values of $c_k$ very close to $2^k-1$ but, in general, numerical
evidence shows that $c_k/2^k$ is closer  to zero more often than it
is to  $1$. In fact, we show in Section~\ref{sec:6} that this is
indeed the case at least for odd indices (see
Corollary~\ref{cor:X},  Corollary~\ref{cor:1} and relation~\eqref{averageconjecture}).

\section{Improving binary estimates and some closed formulae}
\label{closedformulae}
\label{sec:4}

 In order to obtain better bounds for $a_k$, we introduce the following classes of odd integers. For
 a positive integer $m$ we define
 {\small
\[
{\cal C}_m=\{k\equiv 1\pmod 2| \ 2^{k+m}-1\equiv \sum_{i=1}^m
2^{j_i} \pmod {k},\ \text{for}\ 0\le j_1<j_2<\cdots  <j_m\le
m+k-2\}.
\]
}
Let us observe that ${\cal C}_m\subset {\cal C}_{m+1}$. Indeed, if
$k$ is in ${\cal C}_m$, we then have $2^{k+m}-1\equiv 2^{j_1}+\cdots
+2^{j_m}$ for some $0\le j_1<j_2<\cdots  <j_m\le m+k-2$. Multiplying
by 2 the above congruence and adding one to both sides, we get
$2^{k+m+1}-1\equiv 1+2^{j_1+1}+\cdots +2^{j_m+1}$, representation
which  implies that $k$ belongs to ${\cal C}_{m+1}$. Note also
that Lemma~\ref{thetwo} shows that every odd integer $k\ge 3$
belongs to ${\cal C}_{u}$, where $u=\left\lceil \log k/\log
2\right\rceil$. Hence, we have
$
2\BBN+1=\bigcup_{m\in \BBN} {\cal C}_m.
$

\begin{thm}\label{c1} For every $k\in {\cal C}_1$, we have
$$
2^k-1< a_k < 2^{k+1}-1.
$$
In particular, $c_k/2^k\to 0$ as $k\to \infty$ through ${\cal C}_1$.
Furthermore, $a_k=2^{k+1}-1-2^{j_1}$, where $j_1=j_0+st_k$, with
$s=\lfloor (k-1-j_0)/t_k\rfloor$, and $0\le j_0\le t_k-1$ is such
that $2^{k+1}-1\equiv 2^{j_0}\pmod k$.
\end{thm}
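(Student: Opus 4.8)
The plan is threefold: obtain the upper bound directly from the definition of $\cC_1$, obtain the lower bound from Corollary~\ref{cor4} together with an elementary divisibility fact, and then pin down the exact value of $a_k$ by a pigeonhole argument on binary digits once $a_k$ has been trapped strictly between $2^k-1$ and $2^{k+1}-1$.

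For the upper bound: since $k\in\cC_1$, by definition there is an exponent $j\in\{0,1,\dots,k-1\}$ with $2^{k+1}-1\equiv 2^{j}\pmod k$, so $N:=2^{k+1}-1-2^{j}$ is a positive multiple of $k$. Because $2^{k+1}-1=1+2+\cdots+2^{k}$ carries a $1$ in position $j\le k-1$, deleting that bit gives $s(N)=k$; hence $N$ is a Niven number with digit sum $k$, and $a_k\le N<2^{k+1}-1$. The assertion $c_k/2^k\to0$ is then immediate, since $c_k=a_k/k\le(2^{k+1}-2)/k$ forces $c_k/2^k<2/k$.

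For the lower bound I would use that Corollary~\ref{cor4} gives $a_k\ge 2^k-1$ and that $2^k-1$ is the unique integer of binary digit sum $k$ not exceeding $2^k-1$; so it remains only to exclude $a_k=2^k-1$, i.e.\ to check $k\nmid 2^k-1$ for odd $k>1$. For this, let $p$ be the least prime divisor of $k$: the multiplicative order of $2$ modulo $p$ divides both $k$ and $p-1$, hence divides $\gcd(k,p-1)=1$ (no prime factor of $p-1$ can divide $k$), forcing $2\equiv1\pmod p$, which is absurd for odd $p$. Thus $2^k-1<a_k<2^{k+1}-1$.

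Finally, the closed form. From $a_k<2^{k+1}-1$ we get $a_k\le 2^{k+1}-2$, so the binary expansion of $a_k$ is supported in positions $0,\dots,k$, and since $s(a_k)=k$ exactly one of these $k+1$ positions carries a $0$; that position cannot be $k$ (else $a_k=2^k-1$, already excluded), so $a_k=2^{k+1}-1-2^{j}$ for a unique $j\in\{0,\dots,k-1\}$, and $k\mid a_k$ is equivalent to $2^{j}\equiv 2^{k+1}-1\pmod k$. As $\gcd(2,k)=1$, the map $i\mapsto 2^i\bmod k$ has period $t_k$, so the nonnegative solutions of this congruence are precisely $j_0,j_0+t_k,j_0+2t_k,\dots$ with $j_0$ the residue described in the statement; minimality of $a_k=2^{k+1}-1-2^{j}$ forces $j$ to be the largest such not exceeding $k-1$, namely $j=j_1=j_0+st_k$ with $s=\lfloor(k-1-j_0)/t_k\rfloor$, giving $a_k=2^{k+1}-1-2^{j_1}$. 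The only step beyond routine bookkeeping is this last structural argument (the shape of $a_k$ in $(2^k-1,2^{k+1}-1)$) together with the small fact $k\nmid 2^k-1$, and I expect that to be the — fairly mild — crux.
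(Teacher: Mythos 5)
Your proposal is correct and follows essentially the same route as the paper: the lower bound comes from $k\nmid 2^k-1$ (which you prove via the least prime factor of $k$, where the paper simply cites Niven--Zuckerman--Montgomery), the upper bound comes from the $\cC_1$ hypothesis producing the multiple $2^{k+1}-1-2^{j}$ of $k$ with digit sum $k$, and the closed form follows by noting that $a_k$ must have exactly one zero bit among positions $0,\dots,k-1$ and that minimality forces the largest admissible exponent $j_1=j_0+st_k\le k-1$. Your explicit pigeonhole justification that $a_k$ has the shape $2^{k+1}-1-2^{j}$ is a slightly more careful rendering of a step the paper leaves implicit, but it is the same argument.
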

\begin{proof}
We know that $2^k-1\not \equiv 0 \pmod k$
 (see \cite[Problem 37, p. 109]{NZM91}). Hence, an integer of binary length $k$
whose sum of digits is $k$ is not divisible by $k$. Therefore,
$a_k>2^k-1$.

Next, we assume that $a_k$ is an integer of binary length $k+1$ and
sum of digits $k$; that is, $a_k=2^{k+1}-1-2^j$ for some
$j=0,\ldots,k-1$. But $2^{k+1}-1\equiv x \pmod k$, and by hypothesis
there exists $j_0$ such that $x=2^{j_0}$ for some $j_0\in \{0,\ldots
,t_k-1\}$. In order to obtain $a_k$, we need to subtract the highest
power of 2 possible because of the minimality of $a_k$. So, we need
to take the greatest exponent $j_1=j_0+st_k\le k-1$, leading to
 $s=\lfloor (k-1-j_0)/t_k\rfloor$. Hence,
$a_k=2^{k+1}-1-2^{j_1}$.
\end{proof}

 Based on the above argument, we can compute, for instance,
$a_5=55=2^6-1-2^3$, since $2^3-1\equiv 2^3$ (mod 5). Similarly,
 $a_{29}=2^{30}-1-2^5=1073741791$, since  $2^{30}-1\equiv 2^5$ (mod
 29), and $a_{25}=2^{26}-1-2^{19}=66584575$, since
 $2^{26}-1\equiv 2^{19}$ (mod 25), or perhaps the more interesting
 example
 $a_{253}=2^{254}-1-2^{242}$.

\begin{thm}\label{cm}
If $m\in \BBN$, and $k\in {\cal C}_{m+1}\setminus {\cal C}_m$, we
then have
$$2^{k+m-1}-1< a_k < 2^{k+m}-1.$$
Thus, $c_k/2^k\to 0$ as $k\to \infty$ in ${\cal C}_{m}$ for any
fixed $m$.
\end{thm}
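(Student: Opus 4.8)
The plan is to convert the two one-sided class conditions into a single exact statement about the binary \emph{length} of $a_k$, and then read both inequalities off that length.

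First I would isolate, as a clean correspondence, the mechanism already implicit in Lemma~\ref{thetwo} and Theorem~\ref{thethree}: for odd $k$ and an integer $j\ge 0$, one has $k\in\mathcal{C}_j$ if and only if $k$ has a multiple $N$ with $s(N)=k$ possessing exactly $j$ zero binary digits, equivalently of binary length $k+j$ with leading bit $1$. The forward direction is the construction from Theorem~\ref{thethree}: from $2^{k+j}-1\equiv\sum_{i=1}^{j}2^{e_i}\pmod k$ with $0\le e_1<\cdots<e_j\le k+j-2$, set $N=2^{k+j}-1-\sum_{i=1}^{j}2^{e_i}$; since no exponent reaches the top position $k+j-1$, the number $N$ keeps its leading bit, has exactly $j$ zeros and digit sum $k$, and is divisible by $k$. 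For the converse, a multiple $N$ of $k$ with $s(N)=k$ and exactly $j$ zeros equals $2^{k+j}-1$ minus the $j$ powers of $2$ at its zero positions, all below the leading one, which is precisely a witness for $k\in\mathcal{C}_j$.

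Next I would use minimality to pin the length. A qualifying multiple with $z$ zeros has length $k+z$, hence lies in $[2^{k+z-1},2^{k+z}-1]$; these intervals are disjoint and strictly increasing in $z$, so fewer zeros forces a strictly smaller value. Thus $a_k$ realizes the least admissible number of zeros and has length exactly $k+z^{\ast}$, where $z^{\ast}=\min\{j:\ k\in\mathcal{C}_j\}$; equivalently $2^{k+z^{\ast}-1}-1<a_k<2^{k+z^{\ast}}-1$, the upper inequality being strict because $a_k$ has a zero digit (its digit sum $k$ is smaller than its length $k+z^{\ast}$). The stated sandwich is therefore exactly the assertion $z^{\ast}=m$. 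The lower bound follows at once from the non-membership half: since $\mathcal{C}_{m-1}\subset\mathcal{C}_m$, the hypothesis $k\notin\mathcal{C}_m$ forces $k\notin\mathcal{C}_{m-1}$, so no qualifying multiple of length $\le k+m-1$ exists, giving $z^{\ast}\ge m$ and $a_k>2^{k+m-1}-1$. The upper bound requires the membership half to realize $k$ with exactly $m$ zeros, i.e. to furnish a witnessing multiple of length exactly $k+m$; granting this, minimality yields $a_k<2^{k+m}-1$.

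The main obstacle is precisely this last alignment: matching the class index supplied by the hypothesis to the leading exponent $k+m$ in the sandwich, with no off-by-one. Concretely, one must verify that the membership asserted in the hypothesis places $k$ at the level whose witness has length exactly $k+m$ (exactly $m$ zeros, top bit surviving) rather than one unit longer, and dually that the non-membership forbids length $\le k+m-1$ rather than a neighbouring value; the monotonicity of value in the number of zeros then does the rest. Once $a_k<2^{k+m}-1$ is secured, the concluding remark is immediate: for $k\in\mathcal{C}_m$ one has $c_k=a_k/k<2^{k+m}/k$, so with $m$ fixed $c_k/2^k<2^{m}/k\to 0$ as $k\to\infty$ through $\mathcal{C}_m$.
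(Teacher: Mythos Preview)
Your approach is exactly the one the paper has in mind when it writes ``Similar as the proof of Theorem~\ref{c1}'': the equivalence ``$k\in\mathcal{C}_j$ $\Leftrightarrow$ $k$ has a multiple of digit sum $k$ and binary length $k+j$'' (both directions of which you verify correctly), together with the monotonicity of value in the number of zero digits, pins the binary length of $a_k$ to $k+z^{\ast}$ with $z^{\ast}=\min\{j:k\in\mathcal{C}_j\}$ and yields $2^{k+z^{\ast}-1}-1<a_k<2^{k+z^{\ast}}-1$.

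The off-by-one you flag as the ``main obstacle'' is real, but it is a slip in the \emph{statement}, not in your argument. Under the hypothesis $k\in\mathcal{C}_{m+1}\setminus\mathcal{C}_m$ one has $z^{\ast}=m+1$ (not $m$): non-membership in $\mathcal{C}_m$ already rules out length $k+m$, not merely $k+m-1$, while membership in $\mathcal{C}_{m+1}$ produces a witness of length $k+m+1$, not $k+m$. The correct sandwich is therefore $2^{k+m}-1<a_k<2^{k+m+1}-1$, shifted by one from what is displayed. That this is the intended indexing is confirmed by the concluding sentence of the theorem, which concerns $k\in\mathcal{C}_m$ rather than $\mathcal{C}_{m+1}\setminus\mathcal{C}_m$, and by the proof of Corollary~\ref{cor:X}, which invokes exactly $a_k\le 2^{k+m}$ for $k\in\mathcal{C}_m$. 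Your final step, $c_k/2^k<2^{m}/k\to 0$ for $k\in\mathcal{C}_m$ with $m$ fixed, is then correct as written.
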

\begin{proof}
Similar as the proof of Theorem~\ref{c1}.
\end{proof}

\begin{thm}
\label{mersenne} For all integers $k=2^i-1\ge 3$, we have
\begin{equation}
\label{mers_bound} a_k\le 2^{k+ {k^-}}+2^k-2^{k-i}-1,
\end{equation}
where ${k^-}$ is the least positive residue of $-k$ modulo $i$.
Furthermore, the bound \eqref{mers_bound} is tight when  $k=2^i-1$
is a Mersenne prime. In this case,  we have $c_k/2^k\to 1/2$ as
$k\to \infty$ through Mersenne primes, assuming that this set is
infinite.
\end{thm}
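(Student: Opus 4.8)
The plan is to construct an explicit multiple of $k=2^i-1$ whose binary digit sum is exactly $k$, so that the minimality of $a_k$ forces the stated upper bound. The arithmetic backbone is the congruence $2^i\equiv 1\pmod k$, which makes the multiplicative order $t_k$ a divisor of $i$; in fact for $k=2^i-1$ one has $t_k=i$ (unless $k$ has a smaller such exponent, but $t_k\mid i$ is all we need). Thus residues of powers of $2$ modulo $k$ are periodic with period dividing $i$, and $2^a+2^b\equiv 2^a+2^b$ can be traded freely along arithmetic progressions of step $i$ in the exponent.

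First I would take the ``base'' block $N_0=2^k-1=1+2+\cdots+2^{k-1}$, which has digit sum exactly $k$ but is not divisible by $k$ (indeed $2^k-1\equiv 2^{k \bmod i}-1\pmod k$, and since $k=2^i-1\equiv -1\pmod i$ we get $2^k-1\equiv 2^{i-k^-}-1\equiv 2^{-k^-}-1\pmod k$, a nonzero residue in general). To fix divisibility without changing the digit sum, I would push one high-order bit further up: replace the top bit $2^{k-1}$, or rather pull off a low chunk, so as to land on a multiple of $k$. Concretely, the shape of the claimed bound $a_k\le 2^{k+k^-}+2^k-2^{k-i}-1$ suggests the candidate
\[
n k = \bigl(2^k-1\bigr) - \bigl(2^{k-i}+2^{k-i+1}+\cdots+2^{k-1}\bigr) + \bigl(2^{k}+2^{k+1}+\cdots+2^{k+k^--1}\bigr),
\]
i.e.\ we delete the top $i$ ones (a block of digit-sum $i$) and append $k^-$ new ones at the top; the net digit sum is $k-i+k^-$. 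That is not yet $k$, so the correct accounting must instead delete a block of size $k^-$ low down and reinsert it shifted by a multiple of $i$ up top — the point being that deleting $2^{k-i}+\cdots+2^{k-1}$ (digit sum $i$, congruent to $0\bmod k$ since it equals $2^{k-i}(2^i-1)=2^{k-i}k$) changes neither the residue nor... wait, it does change the digit sum. So the honest construction is: start from $M=2^{k+k^-}-1$ with digit sum $k+k^-$, observe $M\equiv 2^{k^-}-1+\text{(stuff)}\pmod k$ using periodicity, and subtract exactly $k^-$ suitably chosen distinct powers $2^{j}$ with $j$ in the top range to simultaneously kill the residue and drop the digit sum to $k$. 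The minimality of $a_k$ then gives $a_k\le M-\sum 2^{j_\nu}\le 2^{k+k^-}+2^k-2^{k-i}-1$ once the $j_\nu$ are taken as large as possible, which is where the term $2^k-2^{k-i}-1 = 2^{k-i}(2^i-1)\!-\!1+\cdots$ enters as the maximal correction.

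The second assertion — tightness when $k=2^i-1$ is a Mersenne prime, and $c_k/2^k\to 1/2$ — I would prove by showing the bound is also a lower bound in that case. When $k$ is prime, $t_k$ is a divisor of $i$ greater than $1$, hence $t_k=i$ (as $i$ is prime whenever $2^i-1$ is), so the residues $2^0,2^1,\dots,2^{i-1}$ are all distinct mod $k$ and there is no ``slack'' to exploit a shorter multiple: any $q$-Niven number with digit sum $k$ and binary length $<k+k^-$ would have to be a sum of $k$ distinct powers of $2$ within a window that, after reducing exponents mod $i$, cannot hit $0\bmod k$ — a counting/pigeonhole argument on which residues classes mod $k$ are representable as sums of few powers. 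Establishing that no smaller length works is the crux: one must rule out all binary lengths strictly between $k$ and $k+k^-$, and for each such length argue (via the exact value of $t_k=i$ and the residue $2^{i-k^-}-1\not\equiv 0$) that the required subtraction cannot be achieved without exceeding the length budget. Granting tightness, $a_k=2^{k+k^-}+2^k-2^{k-i}-1$, so $c_k=a_k/k\sim 2^{k+k^-}/2^i = 2^{k+k^--i}$; since $k^-\equiv -k\equiv 1\pmod i$ and $0<k^-\le i$, for $i\ge 2$ we have $k^-=1$ exactly when... here I must check that $k^- = i-1$ or handle it so that $k+k^--i = k-1$, giving $c_k\sim 2^{k-1}$ and hence $c_k/2^k\to 1/2$. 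I expect the main obstacle to be precisely this bookkeeping together with the lower-bound (optimality) half, since the upper bound is a direct, if fiddly, explicit construction.
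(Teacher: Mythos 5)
Your upper-bound half never actually produces a valid multiple. Your first candidate you discard yourself (its digit sum is $k-i+k^-$), and your fallback --- start from $M=2^{k+k^-}-1$ and delete $k^-$ ``suitably chosen'' bits to fix the residue and the digit sum --- is not only unproved but sometimes impossible: since $i\mid k+k^-$ and $2^i\equiv 1\pmod k$, we have $M\equiv 0\pmod k$ already (your claim $M\equiv 2^{k^-}-1+\cdots$ is wrong), so the deleted bits must sum to $0$ modulo $k$; when $k^-=1$ (e.g.\ $i=4$, $k=15$) a single power of $2$ is never $\equiv 0\pmod{2^i-1}$, so no such deletion exists. The intended argument is much more direct: one simply verifies that the bound itself, $N=2^{k+k^-}+2^k-2^{k-i}-1$, does the job, by factoring $N=2^{k-i}(2^i-1)+2^{i\alpha}-1=(2^i-1)\bigl(2^{k-i}+2^{i(\alpha-1)}+\cdots+1\bigr)$ (where $k+k^-=i\alpha$) to get $k\mid N$, and then computing $s(N)=k$ by writing out its binary expansion (all bits $0,\dots,k-1$ present except $k-i$, plus the bit at $k+k^-$); minimality of $a_k$ then gives \eqref{mers_bound}. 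Note one also needs $k^-\ge 1$, i.e.\ $i\nmid 2^i-1$, which is the same fact quoted in Theorem~\ref{c1}.

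The tightness half you explicitly leave open (``Establishing that no smaller length works is the crux''), and this is precisely where the real work lies, so the proposal does not prove the second statement. The argument runs: for $k=2^i-1$ prime one shows $k\in{\cal C}_i\setminus{\cal C}_{i-1}$ --- since $2^{k+i-1}-1\equiv 2^i-1\equiv 0\pmod k$ while any sum of at most $i-1$ distinct powers of $2$, reduced via $2^i\equiv 1$, is a sum of at most $i-1$ members of $\{1,2,\dots,2^{i-1}\}$ and hence lies strictly between $0$ and $k$ --- which forces $2^{k+i-1}-1<a_k<2^{k+i}-1$; then $a_k$ is pinned down exactly by maximizing $x=2^{j_1}+\cdots+2^{j_i}\equiv 2^{k+i}-1\equiv 1\pmod k$, taking $j_i=k+i-2,\dots,j_2=k$ and $j_1$ largest with $2^{j_1}\equiv 2\pmod k$; since $i$ is prime, Fermat gives $k=2^i-1\equiv 1\pmod i$, so $j_1=k-i$ and $k^-=i-1$, making the bound an equality. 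Your $k^-$ bookkeeping is also off: you write $k^-\equiv -k\equiv 1\pmod i$, but for Mersenne primes $k\equiv 1\pmod i$, so $k^-=i-1$ (not $1$); with that corrected, $a_k=2^{k+i-1}+2^k-2^{k-i}-1$ and $c_k/2^k=\frac{k+1}{2k}+\frac1k-\frac{1}{k2^i}-\frac{1}{k2^k}\to\frac12$ follows as you anticipated.
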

\begin{proof}
For the first claim, we show that the sum of binary digits of the
bound of the upper bound on \eqref{mers_bound} is exactly $k$, and
also that this number is a multiple of $k$. From the definition of
$k^-$, we find that $k+k^-=i\alpha$ for some positive integer
$\alpha$. Since
\begin{eqnarray*}
2^{k+ {k^-}}+2^k-2^{k-i}-1
&=& 2^{k-i}(2^i-1)+2^{i\alpha}-1\\
&=&(2^i-1)(2^{k-i}+2^{i(\alpha-1)}+ 2^{i(\alpha-2)}+\cdots +1),
\end{eqnarray*}
we get that $2^{k+ {k^-}}+2^k-2^{k-i}-1$ is divisible by $k$.
Further, $k^-\ge 1$ since $k$ is not divisible by $i$ (see the proof
of Theorem~\ref{c1}), and
\begin{eqnarray*}
s\left(2^{k+ {k^-}}+2^k-2^{k-i}-1\right)
&=& s\left(2^{k+k^- -1}+\cdots +2+1+2^k-2^{k-i}\right)\\
&=& s\left(2^{k+k^- -1}+\cdots +2^k+\cdots+\widehat{2^{k-i}}+ \cdots+2+1+2^k\right)\\
&=& s\left(2^{k+k^-}+2^{k-1}+\cdots +\widehat{2^{k-i}}+ \cdots+2+1\right)\\
&=&k,
\end{eqnarray*}
where $\hat t$ means that $t$ is missing in that sum. The first
claim is proved.

We now consider a Mersenne prime $k=2^i-1$. First, we show that
$k\in {\cal C}_{i}\setminus {\cal C}_{i-1}$. Since $u=\lceil \log
k/\log 2\rceil=i$, by Lemma~\ref{thetwo}, we know that $k\in {\cal
C}_i$. Suppose by way of contradiction that $k\in {\cal C}_{i-1}$.
Then
\begin{equation}\label{important}
2^{k+i-1}-1\equiv 2^{j_1}+\cdots+2^{j_{i-1}} \pmod k
\end{equation}
holds with some $0\le j_1<j_2<\cdots  <j_{i-1}\le k+i-3$. Since $k$
is prime, we have that $2^{k-1}\equiv 1 \pmod k$, and so
$2^{k+i-1}-1\equiv 2^i-1\equiv 0\pmod k.$

Because $2^i\equiv 1 \pmod k$, we can reduce all powers $2^{j}$ of
$2$ modulo $k$ to powers with exponents less than or equal to $i-1$.
We get at most $i-1$ such terms. But in this case, the sum of at
least one and at most $i-1$  distinct members of the set
$\{1,2,\ldots,2^{i-1}\}$ is positive and less than the sum of all of
them, which is $k$. So, the equality (\ref{important}) is
impossible.

To finish the proof, we need to choose the largest representation
$x=2^{j_1}+\cdots +2^{j_{i}}$, with  $0\le j_1<j_2<\cdots <j_{i}\le
k+i-2$, such that  $2^{k+i}-1\equiv x \pmod k$. But $2^{k+i}-1\equiv
2^{i+1}-1\equiv 1 \pmod k$. Since the exponents $j$ are all
distinct, the way to accomplish this is to take $j_i=k+i-2$,
$j_{i-1}=k+i-3,\ldots, j_2=k$, and finally $j_1$ to be the greatest
integer with the property that the resulting $x$ satisfies $x\equiv
1 \pmod k$. Since $x=2^{j_1}+2^{k}(1+2+\cdots
+2^{i-2})=2^{j_1}+2^k(2^{i-1}-1)\equiv 2^{j_1}+2^i-2\equiv 2^{j_1}-1
\pmod k$, we need to have $2^{j_1}\equiv 2 \pmod k$.  Since the
multiplicative order of $2$ modulo $k$ is clearly $i$, we have to
take the largest $j_1=1+si$ such that $1+s i<k$. But $i$ must be
prime too and so $2^{i-1}\equiv 1\pmod i$. This implies
$k=2^i-1\equiv 1 \pmod i$. Therefore $j_1=k-i$. So,
$a_k=2^{k+i}-1-x=2^{k+i}-1-2^{k-i}-2^{k+i-1}+2^k=2^{k+i-1}+2^k-2^{k-i}-1$
and the inequality  given in our statement becomes an equality since
$k^{-}=i-1$ in this case.

 Regarding the limit claim, we observe that
$$\frac{c_k}{2^k}=\frac{k+1}{2k}+\frac{1}{k}-\frac{1}{k2^i}-\frac{1}{k2^k}\
\longrightarrow\ \frac{1}{2},$$ as $i$ (and as a result $k$)  goes
to infinity.
\end{proof}

Between the two extremes, Theorems~\ref{cm} and \ref{mersenne}, we
find out that the first situation is more predominant (see
Corollary~\ref{cor:1}). Next, we give quantitative results on the
sets ${\cal C}_m$. However we start with a result which shows that
${\cal C}_1$ is of asymptotic density zero as one would less expect.

\section{${\cal C}_1$ is of density zero}
\label{sec:5}

Here, we show that ${\cal C}_1$ is of asymptotic density zero. For
the purpose of this section only, we omit the index and simply write
$$\cC=\{1\le n: 2^{n+1}-1\equiv 2^j\pmod n~{\text{\rm
for~some}}~j=1,2,\ldots\}.
$$
It is clear that $\cC$ contains only odd numbers. Recall that for a positive
real number $x$ and a set ${\cal A}$ we put ${\cal A}(x)={\cal
A}\cap [1,x]$. We prove the following estimate.

\begin{thm}
\label{thm:1} The estimate
$$
\#\cC(x)\ll \frac{x}{(\log\log x)^{1/7}}
$$
holds for all $x>e^{e}$.
\end{thm}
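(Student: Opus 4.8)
The plan is to show that membership of an odd $n$ in $\cC$ forces a strong multiplicative constraint, and then count the exceptional $n$ for which this constraint can hold. If $n\in\cC$, then $2^{n+1}-1\equiv 2^j\pmod n$ for some $j\ge 1$, so $2^{n+1-j}\equiv 1+2^{-j}\cdot 1$... more precisely, writing the congruence as $2\cdot 2^n - 1 \equiv 2^j\pmod n$ and invoking that for odd $n$ we may consider things modulo any prime power $p^a\|n$, we obtain $2^{n+1}-2^j\equiv 1\pmod n$. The key observation is that this ties the residue $2^n\bmod n$ to a \emph{single} power of $2$, and for a ``typical'' $n$ the residue $2^n\bmod n$ is not of this special form. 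The standard way to exploit this is via the order of $2$: let $p\mid n$ and let $d_p=\mathrm{ord}_p(2)$. Then $2^{n+1}-1\equiv 2^j\pmod p$ relates $n+1$ and $j$ modulo $d_p$; combined with information for two different primes $p,p'\mid n$ one gets a congruence condition on $n$ itself (via $n\bmod d_p$), and such conditions are restrictive enough to be counted.

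First I would set up the sieve/counting skeleton that is by now routine for such ``$f(n)\equiv$ (something very special)$\pmod n$'' problems (as in work of Luca and others): fix a slowly growing parameter $y=y(x)$ (a small power of $\log\log x$), and discard from $[1,x]$ the usual ``bad'' sets whose size is $o(x/(\log\log x)^{1/7})$: integers with no prime factor in $(\,y^{1/2},y\,]$ or with $P^+(n)\le $ some threshold, integers divisible by $p^2$ for some $p>\log x$, integers $n$ for which some prime $p\mid n$ has $\mathrm{ord}_p(2)$ unusually small (i.e.\ $p\mid 2^m-1$ for some small $m$), and integers with an abnormally large number of prime factors. After removing these, a surviving $n\in\cC(x)$ has a prime factor $p$ in a controlled range with $\mathrm{ord}_p(2)=d_p$ reasonably large, and the defining congruence read modulo $p$ becomes $2^{n+1}\equiv 1+2^j\pmod p$ for some $j$ in a bounded range $1\le j\le t_n-1< n$.

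Second, I would extract the main constraint. Reducing $2^j\bmod p$ means $j$ is determined modulo $d_p$, so there are at most $d_p$ possible values of $2^j\bmod p$; for each, $2^{n+1}\bmod p$ is pinned to one of $\le d_p$ residues, which pins $n+1\bmod d_p$ to $O(\text{small})$ residue classes — wait, more carefully: $2^{n+1}\equiv 1+2^j$ fixes $n+1\bmod d_p$ \emph{given} $j\bmod d_p$, so the pair is constrained to lie in a set of size $\le d_p$ inside $(\Z/d_p)^2$. The gain comes from playing this off against a \emph{second} prime $q\mid n$ (or against the size of $n$ directly): the number of $n\le x$ with $n$ in a union of $\le d_p$ residue classes mod $d_p$ is $\ll x\big(\tfrac{1}{d_p}+\tfrac{d_p}{x}\big)$-ish summed appropriately, but since $j$ ranges freely one must instead count pairs: for each admissible prime $p$ and each $j<x$, the condition $2^{n+1}\equiv 1+2^j\pmod p$ confines $n$ to a single class mod $d_p$; summing $1/d_p$ over $n\le x$ with $p\mid n$ and then over the allowed $p$, using that $\sum_{p\le y}1/d_p$ converges-ish / is small, yields the bound. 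The exponent $1/7$ will come out of optimizing $y$ against the several error terms (the $2^2\mid n$ term, the ``no medium prime factor'' term which costs $\exp(-c\log y/\log\log y)$-type savings, and the main term $\sum 1/d_p\sim$ small power of $\log\log x$).

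The main obstacle will be the last step: controlling the sum $\sum_{p} 1/d_p$ over the relevant primes $p$ and simultaneously ensuring the ``freedom'' in $j$ does not destroy the saving. The delicate point is that $j$ is \emph{not} bounded a priori — the hypothesis only gives $j\ge 1$, and a priori $j$ could be as large as $\mathrm{ord}_n(2)-1$, which can be close to $n$ — so one cannot afford to union-bound over all $j<x$ without losing. The resolution is to reduce $j$ modulo $d_p$ first (legitimate since only $2^j\bmod p$ matters), making the effective number of choices for the pair $(n\bmod d_p, j\bmod d_p)$ equal to $d_p$ rather than $d_p^2$; then $\sum_{n\le x,\ p\mid n} \#\{\text{valid classes}\}/d_p \ll x/d_p \cdot (1/d_p)\cdot d_p = x/d_p$, and summing a convergent-type series over $p$ in a dyadic range, against the probability $1/p$ that $p\mid n$, gives the claimed power of $\log\log x$. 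Handling the primes $p$ with small $d_p$ (which are sparse: $\#\{p: d_p\le z\}\le \sum_{m\le z} 2^m \ll 2^z$, so they contribute negligibly once $z$ is a bit below $\log\log x$) is the remaining technical wrinkle, and the exponent $1/7$ is simply whatever the bookkeeping allows after balancing $y\approx (\log\log x)^{c}$. I expect the write-up to mirror standard density-zero arguments, with all the real work in choosing $y$ and verifying each discarded set has size $\ll x/(\log\log x)^{1/7}$.
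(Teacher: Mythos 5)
Your reduction modulo a prime $p\mid n$ of the defining congruence --- $2^{m+1}\equiv 1+2^{j}\pmod p$ after using $2^{p}\equiv 2\pmod p$ --- is indeed where the paper's main term comes from, but the step where you claim a saving is where your argument breaks. Knowing $j$ only modulo $d_p=\mathrm{ord}_p(2)$ is fine, and the admissible pairs $(n\bmod d_p,\,j\bmod d_p)$ do form a set of size at most $d_p$ in $(\Z/d_p\Z)^2$; however, projecting to the $n$-coordinate this permits up to $d_p$ classes, i.e.\ possibly no constraint at all. Concretely, the number of admissible classes for $n\bmod d_p$ equals $\#\{j\bmod d_p:\ 1+2^{j}\in\langle 2\rangle\subset(\Z/p\Z)^{*}\}$, and when $2$ is a primitive root modulo $p$ (or whenever $\langle 2\rangle$ has bounded index) this is essentially all residues: every class of $n$ is allowed, and your count degenerates to the trivial $x/p$ per prime. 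Your bookkeeping ``$x/d_p\cdot(1/d_p)\cdot d_p=x/d_p$'' silently assumes one admissible class per prime, which is exactly what fails; so the heuristic that ``$2^{n}\bmod n$ is typically not of this special form'' is never substantiated, and no power of $\log\log x$ comes out of the sketch as written.

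The paper supplies the two ingredients you are missing. First, one cannot use arbitrary prime factors: an auxiliary prime $q$ of size about $\tfrac12(\log\log x/\log\log\log x)^{1/2}$ is fixed, and by the effective Chebotarev theorem of Lagarias--Odlyzko the primes $p\equiv 1\pmod q$ with $t_p\mid(p-1)/q$ (so that $\langle 2\rangle$ has index at least $q$) have density $1/(q(q-1))$; a Brun sieve then shows that all but $O(x/(\log\log x)^3)$ integers $n\le x$ have such a prime factor $p\le x^{1/100}$ --- this is why $q$ cannot be taken larger and is what ultimately throttles the exponent. Second, even for these primes the number of admissible classes is governed by the number of solutions of $x_1-x_2-x_3=0$ with $x_i\in X=\{2^{j}\bmod p\}$, a nontrivial question about additive equations in a multiplicative subgroup; the paper bounds it by $t^{3}/p+O(t^{9/8}p^{3/4})$ via the Heath-Brown--Konyagin estimate $|T_a|\ll t^{3/8}p^{1/4}$ for the subgroup exponential sums, after discarding primes with $t_p<p^{1/2}(\log p)^{10}$ (your $2^{z}$-counting handles only these) and primes whose shifted prime $p-1$ has a divisor near $p^{1/2}$ (Indlekofer--Timofeev). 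Only then does the per-prime saving $t/p\le 1/q$ materialize, and balancing $x\exp(-\log\log x/q^{2})$ against $x\log\log x/q^{3}$ and $x/q^{5/16+o(1)}$ yields the stated $x/(\log\log x)^{1/7}$. Without the Chebotarev-selected index condition and without an exponential-sum (or sum--product) input of this kind, your plan cannot be completed.
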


\begin{proof}
We let $x$ be large, and put $q$ for the smallest prime exceeding
$$y=\frac{1}{2}\left(\frac{\log\log x}{\log\log\log x}\right)^{1/2}.
$$
Clearly, for large $x$ the prime $q$ is odd and its size is
$q=(1+o(1))y$ as $x\to\infty$. For an odd prime $p$ we write $t_p$
for the order of $2$ modulo $p$ first defined at the beginning of
Section~\ref{sec:3}. Recall that this is the smallest positive
integer $k$ such that $2^k\equiv 1\pmod p$. Clearly, $t_p\mid p-1$.
We put
\begin{equation}
\label{set:P}
{\cal P}=\{p~{\text{\rm prime}}:p\equiv 1\pmod q~{\text{\rm
and}}~t_p\mid (p-1)/q\}.
\end{equation}
The effective version of Lagarias and Odlyzko of Chebotarev's
Density Theorem (see \cite{LO}, or page 376 in \cite{Pap}), shows
that there exist absolute constants $A$ and $B$ such that the
estimate
\begin{equation}
\label{eq:Pt} \# {\cal
P}(t)=\frac{\pi(t)}{q(q-1)}+O\left(\frac{t}{\exp\left(A{\sqrt{\log
t}}/q\right)}\right)
\end{equation}
holds for all real numbers $t$ as long as $q\le B(\log t)^{1/8}$. In
particular, we see that estimate \eqref{eq:Pt} holds when $x>x_0$ is
sufficiently large and uniformly in $t\in [z,x]$, where we take
$z=\exp((\log\log x)^{100})$.

We use the above estimate to compute the sum of the reciprocals of
the primes $p\in {\cal P}(u)$, where we put $u=x^{1/100}$. We have
$$
S=\sum_{p\in {\cal P}(u)}\frac{1}{p}=\sum_{\substack{p\in {\cal P}\\
p\le z}}\frac{1}{p}+\sum_{\substack{p\in {\cal P}\\ z< p\le
u}}\frac{1}{p}=S_1+S_2.
$$
For $S_1$, we only use the fact that every prime $p\in {\cal P}$ is
congruent to $1$ modulo $q$. By the Brun-Titchmarsh inequality we
have
$$
S_1\le \sum_{\substack{p\le z\\ p\equiv 1\pmod q}}\frac{1}{p}\ll
\frac{\log\log z}{\phi(q)}\ll \frac{\log\log\log x}{q}=O(1).
$$
For $S_2$, we are in the range where estimate \eqref{eq:Pt} applies
so by Abel's summation formula
\begin{eqnarray*}
S_2 & = & \sum_{\substack{p\in {\cal P}\\ z\le p\le
u}}\frac{1}{p}\ll \int_z^u\frac{d\#{\cal P}(t)}{t}
 = \frac{\#{\cal P}(t)}{t}\Big|_{t=z}^{t=u}\\
 & + & \int_{z}^u
\left(\frac{\pi(t)}{q(q-1)t^2}+O\left(\frac{t}{\exp(A{\sqrt{\log
t}}/q)}\right)\right) dt\\
& = & \int_z^u \frac{dt}{q(q-1)t\log t}+
O\left(\frac{1}{q^2}\right)+O\left(\int_z^u\frac{dt}{q(q-1)t(\log
t)^2}\right)\\
& = & \frac{\log\log u-\log\log
z}{q(q-1)}+O\left(\frac{1}{q^2}\right)
 =  \frac{\log\log x}{q(q-1)}+ O(1).
\end{eqnarray*}
In the above estimates, we used the fact that
$$
\pi(t)=\frac{t}{\log t}+O\left(\frac{t}{(\log t)^2}\right),
$$
as well as the fact that
$$
\frac{t}{\exp(A{\sqrt{\log t}}/q)}=O\left(\frac{t}{q^2(\log
t)^2}\right)
$$
uniformly for $t\ge z$. To summarize, we have that
\begin{equation}
\label{eq:sumoverP} S=\frac{\log\log x}{q(q-1)}+O(1)=\frac{\log\log
x}{q^2} +O\left(\frac{\log\log x}{q^3}+1\right)=\frac{\log\log
x}{q^2}+O(1).
\end{equation}

We next eliminate a few primes from ${\cal P}$ defined in \eqref{set:P}. Namely, we let
$$
{\cal P}_1=\{p:t_p<p^{1/2}/(\log p)^{10}\},
$$
and
$$
{\cal P}_2=\{p:p-1~{\text{\rm has~a~divisor}}~d~{\text{\rm
in}}~[p^{1/2}/(\log p)^{10}, p^{1/2}(\log p)^{10}]\}.
$$
A well-known elementary argument (see, for example, Lemma 4 in
\cite{BaGaLuSh}) shows that
\begin{equation}
\label{eq:P1} \#{\cal P}_1(t)\ll \frac{t}{(\log t)^2},
\end{equation}
therefore by the Abel summation formula one gets easily that
$$
\sum_{p\in {\cal P}_1}\frac{1}{p}=O(1).
$$
As for ${\cal P}_2$, results of Indlekofer and Timofeev from
\cite{IT} show that
$$
\#{\cal P}_2(t)\ll \frac{t\log\log t}{(\log t)^{1+\delta}},
$$
where $\delta=2-(1+\log\log 2)/\log 2=0.08\ldots$, so again by
Abel's summation formula one gets that
$$
\sum_{p\in {\cal P}_2}\frac{1}{p}=O(1).
$$
We thus arrive at the conclusion that letting ${\cal Q}={\cal
P}\backslash ({\cal P}_1\cup {\cal P}_2)$, we have
\begin{equation}
\label{eq:S'} S'=\sum_{p\in {\cal Q}(u)}\frac{1}{p}=S-\sum_{p\in
{\cal P}_1(u)\cup {\cal P}_2(u)}\frac{1}{p}=\frac{\log\log
x}{q^2}+O(1).
\end{equation}
Now let us go back to the numbers $n\in {\cal C}$. Let ${\cal D}_1$
be the subset of ${\cal C}(x)$ consisting of the numbers free of
primes in ${\cal Q}(u)$. By the Brun sieve,
\begin{eqnarray}
\label{eq:C1} \#{\cal D}_1 & \ll & x\prod_{p\in {\cal
Q}(u)}\left(1-\frac{1}{p}\right) = x\exp\left(-\sum_{p\in {\cal
Q}(u)}\frac{1}{p}+O\left(\sum_{p\in {\cal
Q}(u)}\frac{1}{p^2}\right)\right)\nonumber\\ &\ll&
x\exp(-S'+O(1))\ll
x\exp\left(-\frac{\log\log x}{q^2}\right)\nonumber\\
& = & \frac{x}{(\log\log x)^{4+o(1)}}\ll \frac{x}{(\log\log x)^3}.
\end{eqnarray}
Assume from now on that $n\in {\cal C}(x)\backslash {\cal D}_1$.
Thus, $p\mid n$ for some prime $p\in {\cal Q}(u)$. Assume that
$p^2\mid n$ for some $p\in {\cal Q}(u)$. Denote by $\cD_2$ the
subset of such $n\in \cC(x)\backslash \cD_1$. Keeping $p\in {\cal
Q}(u)$ fixed, the number of $n\le x$ with the property that $p^2\mid
n$ is $\le x/p^2$. Summing up now over all primes $p\equiv 1\pmod q$
not exceeding $x^{1/2}$, we get that the number of such $n\le x$ is
at most
\begin{equation}
\label{eq:cC2} \#\cD_2\le \sum_{\substack{p\le x^{1/2}\\ p\equiv
1\pmod q}} \frac{x}{p^2}\ll \frac{x}{q^2\log q}\ll \frac{x}{\log\log
x}.
\end{equation}
Let $\cD_3=\cC(x)\backslash (\cD_1\cup \cD_2)$. Write $n=pm$, where
$p$ does not divide $m$. We may also assume that $n\ge x/\log x$
since there are only at most $x/\log x$ positive integers failing
this condition. Put $t=t_p$. The definition of ${\cal C}$ implies
that
$$
2^{mp+1}\equiv 2^j+1\pmod p
$$
for some $j=1,2,\ldots,t$, and since $2^p\equiv 2\pmod p$, we get
that $2^{mp+1}\equiv 2^{m+1}\pmod p$. We note that $2^{m+1}\pmod p$
determines $m\le x/p$ uniquely modulo $t$. We estimate the number of
values that $m$ can take modulo $t$. Writing $X=\{2^j\pmod p\}$, we
see that $\#\{m\pmod p\}\le I/t$, where $I$ is the number of
solutions $(x_1,x_2,x_2)$ to the equation
\begin{equation}
\label{eq:*}
 x_1-x_2-x_3=0,\qquad x_1,~x_2,~x_3\in X.
\end{equation}
Indeed, to see that, note that if $m$ and $j$ are such that
$2^{m+1}\equiv 1+2^j\pmod p$, then
$(x_1,x_2,x_3)=(2^{m+1+y},2^y,2^{j+y})$ for $y=0,\ldots,t-1$, is
also a solution of equation \eqref{eq:*}, and conversely, every
solution $(x_1,x_2,x_3)=(2^{y_1},2^{y_2},2^{y_3})$ of equation
\eqref{eq:*} arises from $2^{m+1}\equiv 1+2^{j}\pmod p$, where
$m+1=y_1-y_2$ and $j=y_3-y_2$, by multiplying it with $2^{y_2}$.

To estimate $I$, we use exponential sums. For a complex number $z$
put ${\bf e}(z)=\exp(2\pi i z)$. Using the fact that for $z\in
\{0,1,\ldots,p-1\}$ the sum
$$
\frac{1}{p}\sum_{a=0}^{p-1}{\bf e}(a z/p)
$$
is $1$ if and only if $z=0$ and is $0$ otherwise, we get
$$
I=\frac{1}{p}\sum_{x_1,x_2,x_3\in X}\sum_{a=0}^{p-1}{\bf
e}(a(x_1-x_2-x_3)/p).
$$
Separating the term for $a=0$, we get
$$
I  =  \frac{(\# X)^3}{p}+\frac{1}{p}\sum_{a=1}^{p-1}
\sum_{x_1,x_2,x_3\in X} {\bf e}(a(x_1-x_2-x_3)/p)=\frac{t^3}{p}+
\frac{1}{p}\sum_{a=1}^{p-1} T_a T_{-a}^2,
$$
where we put
$
T_{a}=\sum_{x_1\in X} {\bf e}(ax_1/p).
$
A result of Heath-Brown and Konyagin \cite{HBK}, says that if $a\ne
0$, then
$$
|T_a|\ll t^{3/8}p^{1/4}.
$$
Thus,
$$
I=\frac{t^3}{p}+O(t^{9/8}p^{3/4}),
$$
leading to the fact that the number of values of $m$ modulo $t$ is
$$
\#\{m\pmod t\}\le \frac{I}{t}\le \frac{t^2}{p}+O(t^{1/8}p^{3/4}).
$$
Since also $m\le x/p$, it follows that the number of acceptable
values for $m$ is
$$
\ll \frac{x}{pt}\left(\frac{t^2}{p}+t^{1/8}p^{3/4}\right)\ll
\frac{xt}{p^2}+\frac{x}{t^{7/8}p^{1/4}}
$$
(note that $x/pt\ge 1$ because $pt<p^2<u^2<x$). Hence,
$$
\#\cD_3\le \sum_{p\in {\cal Q}(u)}\frac{xt}{p^2}+\sum_{p\in {\cal
Q}(u)}\frac{x}{t^{7/8}p^{1/4}}=T_1+T_2.
$$
For the first sum $T_1$ above, we observe that $t\le p/q$, therefore
$t/p^2\le 1/(pq)$. Thus, the first sum above is
\begin{equation}
\label{eq:sum1} T_1 \ll \sum_{\substack{p\in {\cal
Q}(u)}}\frac{x}{pq}\ll \frac{xS'}{q}\ll \frac{x\log\log x}{q^3}\ll
x \frac{(\log\log\log x)^{3/2}}{(\log\log x)^{1/2}},
\end{equation}
where we used again estimate \eqref{eq:S'}. Finally, for the second
sum $T_2$, we change the order of summation and thus get that
\begin{equation}
\label{eq:**}
T_2\le x\sum_{t\ge t_0} \frac{1}{t^{7/8}}\sum_{\substack{p\in {\cal Q}(u)\\
t(p)=t}} \frac{1}{p^{1/4}},
\end{equation}
where $t_0=t_0(q)$ can be taken to be any lower bound on the
smallest $t=t_p$ that can show up. We will talk about it later. For
the moment, note that for a fixed $t$, $p$ is a prime factor of
$2^t-1$. Thus, there are only $O(\log t)$ such primes. Furthermore,
for each such prime we have $p>qt$. Hence,
$$
T_2\ll \frac{x}{q^{1/4}}\sum_{t\ge t_0}\frac{\log t}{t^{9/8}}.
$$
Since $p\not\in {\cal P}_1\cup {\cal P}_2$, we get that
$t_p>p^{1/2}(\log p)^{10}$. Since $p\ge 2q+1$, we get that $t\gg
q^{1/2}(\log q)^{10}$. Thus, for large $x$ we may take
$t_0=q^{1/2}(\log q)^9$ and get an upper bound for $T_2$. Hence,
\begin{eqnarray}
\label{eq:sum2}
 T_2 & \ll & \frac{x}{q^{1/4}}\sum_{t>q^{1/2}(\log q)^9}\frac{\log t}{t^{9/8}}\ll
\frac{x}{q^{1/4}}\int_{q^{1/2}(\log q)^9}^{\infty} \frac{\log
s}{s^{9/8}} \,
ds\nonumber \\
& \ll & \frac{x}{q^{1/4}} \left(-\frac{\log
s}{s^{1/8}}\Big|_{q^{1/2}(\log q)^9}^{\infty}\right)
 \ll   \frac{x}{q^{1/4+1/16}(\log q)^{1/8}}\ll \frac{x}{q^{5/16}(\log
q)^{1/8}}\nonumber\\
&  \ll & \frac{x(\log\log\log x)^{1/32}}{(\log\log x)^{5/32}}.
\end{eqnarray}

Combining the bounds \eqref{eq:sum1} and \eqref{eq:sum2}, we get
that
$$
\#\cD_3\ll \frac{x}{(\log\log x)^{1/7}},
$$
which together with the bounds \eqref{eq:C1} and \eqref{eq:cC2}
completes the proof of the theorem.
\end{proof}

\begin{figure}
\begin{center}
\epsfig{file=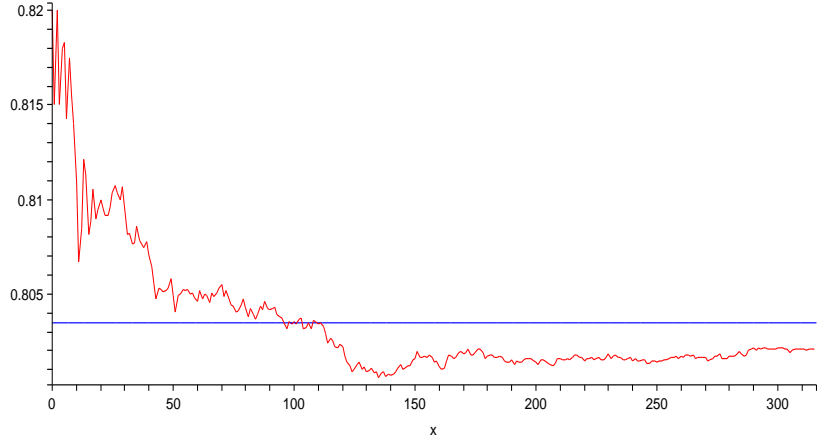,height=2.5in,width=5in}
  \caption{The graph of $2\frac{\#\cC_2(x)}{x}$, $1\le x\le 63201$, $x$ odd.}
\label{fig:densityofc2}
\end{center}
\end{figure}
Although the density of $\cC_1$ is zero, one my try to calculate the
densities of $\cC_m$ ($m>1$) hoping that they are positive and
approach $1$ as $m\to \infty$. In the Figure~\ref{fig:densityofc2}
we have numerically calculated the density of $\cC_2$ within the odd
integers up to 63201. Nevertheless, we abandoned this idea having
conjectured that the density of each $\cC_m$ is still zero. However,
the next section gives a way out to proving that $c_k/2^k$ goes to
zero in arithmetic average over odd integers $k$.

\section{The sets $\cC_m$ for large $m$}
\label{sec:6}

In this section, we prove the following result.

\begin{thm}
\label{thm:2} Put $m(k)=\lfloor \exp(4000(\log\log\log
k)^3)\rfloor$. The set of odd positive integers $k$ such that $k\in
{\cal C}_{m(k)}$ is of asymptotic density $1/2$.
\end{thm}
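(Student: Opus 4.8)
The plan is to show that almost all odd $k$ lie in $\mathcal{C}_{m(k)}$ by exhibiting, for each such $k$, a representation $2^{k+m}-1 \equiv \sum_{i=1}^m 2^{j_i} \pmod k$ with $m = m(k)$ and $0 \le j_1 < \cdots < j_m \le m+k-2$. Recall from Lemma~\ref{thetwo} and the discussion opening Section~\ref{sec:4} that the key flexibility is this: if $k \in \mathcal{C}_m$ then $k \in \mathcal{C}_{m+1}$, and more generally one gains room by working modulo the multiplicative order $t_k$ of $2$. The natural strategy is: first reduce $2^{k+m}-1 \bmod k$ to some residue $x$; then represent $x$ as a sum of exactly $m$ distinct powers of $2$ with bounded exponents, using the fact that once we have \emph{some} representation as a sum of a few powers of $2$, we can inflate the number of terms by repeatedly applying $2^{a} = 2^{a-1}+2^{a-1} \equiv 2^{a-1} + 2^{a-1+t_k} \pmod k$ to split a power into two, provided the shifted exponent $a-1+t_k$ stays in range and avoids collisions. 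The exponent budget is $m+k-2$, which exceeds $k-1$ by $m-1$, and since $t_k \le k-1$ we have headroom roughly $m$ beyond $k$; so the real constraint is that $t_k$ not be too large relative to $m(k)$, i.e. we need $t_k$ small enough that we can perform $\Theta(m)$ splittings with distinct shifted exponents all below $m+k-2$.

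The main point, therefore, is a \emph{statistical} one: for almost all odd $k$, the multiplicative order $t_k$ of $2$ modulo $k$ exceeds any fixed power of $\log k$ is \emph{false} in general, but what we need is the reverse — that $t_k$ is not \emph{too large}, or rather that $k$ has a structure letting us represent residues economically. The honest approach is: write $k = dk'$ where $d$ is a suitably chosen divisor (e.g. the largest divisor of $k$ built from small prime factors, or a single prime power), arrange that $2$ has small order modulo $d$, and handle the residue modulo $d$ using many repeated splittings while controlling the rest by a counting/sieve argument showing the exceptional $k$ are rare. Concretely, I would (i) fix the threshold $m(k) = \lfloor \exp(4000 (\log\log\log k)^3)\rfloor$; (ii) show that for all but $o(x)$ odd $k \le x$, $k$ has a divisor $d$ with $d \le m(k)$ (or with $t_d \le$ some function smaller than $m(k)$) such that the "hard part" $k/d$ can be accommodated — this is where the triple-iterated-logarithm and the constant $4000$ come from, matching the kind of bounds that appear for the smallest such divisor; (iii) for those $k$, run the explicit splitting construction: start from the trivial representation of $2^{k+m}-1 \bmod k$ and split powers $m - (\text{something small})$ times, each split introducing a new exponent $\equiv$ old exponent $\bmod\ t_k$ or $\bmod\ t_d$, chosen in the unused window $[k, m+k-2]$, which has length $m-2 \gg t_d$, so there is room; and (iv) conclude that the density of the good set is at least $1/2 - o(1)$, while it is trivially at most $1/2$ (only odd $k$ qualify), giving density exactly $1/2$.

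The hard part will be step (ii): quantifying exactly which odd $k$ fail to admit an economical representation and showing this exceptional set has density $o(1)$. This is essentially a statement about the distribution of $t_k$ (equivalently about $\mathrm{ord}_k(2)$) and about the anatomy of integers — one wants that for almost all $k$, either $t_k$ itself is below $m(k)$, or $k$ factors as $d \cdot k'$ with $\mathrm{ord}_d(2)$ below $m(k)$ and the congruence conditions modulo $k'$ contributing nothing obstructive. Controlling $\mathrm{ord}_d(2)$ for a divisor $d$ of a random $k$ is delicate because the order can be as large as $d-1$; the resolution is to use that a positive proportion of the time $k$ is divisible by a prime $p$ (or small prime power) for which $\mathrm{ord}_p(2)$ is small — but "small orders are rare", so instead one exploits divisibility of $k$ by the product of \emph{all} its small prime factors and the multiplicativity $t_k = \mathrm{lcm}$ of the $t_{p^a}$, arguing that with the enormous budget $\exp(4000(\log\log\log k)^3)$ one can afford the lcm of orders modulo the small-prime part. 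Making this rigorous — pinning down the exact exceptional-set bound and verifying the exponent windows never collide during the $\Theta(m)$ splittings — is the crux; everything else (the algebraic splitting identity, the reduction modulo $k/d$, the final density bookkeeping) is routine by comparison.
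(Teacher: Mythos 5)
There is a genuine gap, and it is at the heart of the matter. Membership in ${\cal C}_{m(k)}$ with $m(k)=\lfloor\exp(4000(\log\log\log k)^3)\rfloor=(\log k)^{o(1)}$ requires writing the residue of $2^{k+m}-1$ modulo $k$ as a sum of only $m(k)$ distinct powers of $2$, i.e.\ with \emph{far fewer} terms than the $n_k\approx \log_2 k$ terms that Lemma~\ref{thetwo} produces. Your mechanism, splitting $2^{a}\equiv 2^{a-1}+2^{a-1+t}\pmod{\cdot}$, can only \emph{increase} the number of terms of an already existing representation, so it is useless unless you first exhibit a representation of the given residue with at most $m(k)$ terms --- and your proposal never supplies one. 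Moreover the modular bookkeeping in your step (iii) is broken: shifting an exponent by $t_d$ (the order of $2$ modulo a divisor $d$) preserves the congruence only modulo $d$, not modulo $k$, so the ``hard part'' $k/d$ is not ``nothing obstructive'' --- it is exactly the obstruction, since modulo the large prime factors of $k$ (whose orders are typically large, not small) you must hit a prescribed residue exactly with a sum of $m(k)$ powers of $2$ drawn from a constrained exponent window. Dismissing this with a sieve heuristic does not work: no sieve over $k$ makes the congruence modulo $k/d$ go away.

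What the paper actually does at this point is invoke a deep additive-combinatorial input: after a sieve removing seven exceptional families (controlling the $z^{10}$-smooth part $\ell$ of $k$, squarefreeness of $k/\ell$, lower bounds $t_p>p^{1/3}$ and then $t_p'>p^{1/5}$ for the reduced orders, and pairwise coprimality of the $t_p'$), it applies the theorem of Bourgain, Glibichuk and Konyagin \cite{BGK} to each prime $p\mid k/\ell$: every residue class modulo $p$ is a sum of $t=\ell\phi(\ell)$ elements of the multiplicative subgroup generated by $2^{n}$ (of size $>p^{1/5}$), with enough solutions that one can force the exponents to be distinct for the largest prime; these local solutions are then glued by the Chinese Remainder Theorem (using coprimality of the $t_p'$) and combined with an explicit representation modulo $\ell$, and the threshold $\exp(4000(\log\log\log k)^3)$ arises from smooth-number ($\Psi(t,y)$) bounds on $\ell$, since $m\le n_0+\ell\phi(\ell)\le 2w^2$ --- not, as you guess, from the distribution of small orders of $2$. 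Without an ingredient of the strength of \cite{BGK} (or some substitute producing representations with $(\log k)^{o(1)}$ terms modulo the large-order part of $k$), your outline cannot be completed.
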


In particular, most odd positive integers $k$ belong to ${\cal
C}_{m(k)}$.

\begin{proof}
Let $x$ be large. We put
$$
y=(\log\log x)^3.
$$
We start by discarding some of the odd positive integers $k\le x$.
We start with
$$
{\cal A}_1=\{k\le x:q^2\mid k,~{\text{\rm or}}~q(q-1)\mid
k,~{\text{\rm or}}~q^2\mid \phi(k)~{\text{\rm for~some~prime}}~q\ge
y\}.
$$
Clearly, if $n\in {\cal A}_1$, then there exists some prime $q\ge y$
such that either $q^2\mid n$, or $q(q-1)\mid n$, or $q^2\mid p-1$
for some prime factor $p$ of $n$, or $n$ is a multiple of two primes
$p_1<p_2$ such that $q\mid p_i-1$ for both $i=1$ and $2$. The number
of integers in the first category is
\begin{eqnarray*}
&& \le \sum_{y<q\le x^{1/2}}\left\lfloor
\frac{x}{q^2}\right\rfloor\le x\sum_{y<q\le
x^{1/2}}\frac{1}{q^2}\ll x\int_y^{x^{1/2}} \frac{dt}{t^2}\ll
\frac{x}{y}=\frac{x}{(\log\log x)^3}=o(x)
\end{eqnarray*}
as $x\to \infty$. Similarly, the number of integers in the
second category is
$$
\le \sum_{y<q<x^{1/2}+1}\left\lfloor
\frac{x}{q(q-1)}\right\rfloor\ll x\sum_{y\le q\le
x^{1/2}+1}\frac{1}{q^2}\ll \frac{x}{y}= \frac{x}{(\log\log
x)^3}=o(x)
$$
as $x\to\infty$. The number of integers in the third category is
\begin{eqnarray*}
&& \le \sum_{y<q\le x^{1/2}}\sum_{\substack{p\le x\\ p\equiv 1 \pmod
{q^2}}} \left\lfloor\frac{x}{p}\right\rfloor\le x\sum_{y<q\le x^{1/2}}
\sum_{\substack{p\le x\\
p\equiv 1\pmod {q^2}}}\frac{1}{p}\\
&& \ll x\sum_{y<q\le x^{1/2}}\frac{\log\log x}{\phi(q^2)}\ll
x\log\log x\sum_{y<q\le x^{1/2}}\frac{1}{q^2}\\
&& \ll \frac{x\log\log x}{y}=\frac{x}{(\log\log x)^2}=o(x)
\end{eqnarray*}
as $x\to\infty$, while the number of integers in the fourth and most
numerous category is
\begin{eqnarray*}
&& \le \sum_{y<q\le x^{1/2}}\sum_{\substack{p_1<p_2<x\\
p_i\equiv 1\pmod q,~i=1,2}}\left\lfloor
\frac{x}{p_1p_2}\right\rfloor\le x\sum_{y<q\le
x^{1/2}}\sum_{\substack{p_1<p_2<x\\ p_i\equiv 1\pmod
q,~i=1,2}}\frac{1}{p_1p_2}\\
&& \le x\sum_{y<q\le x^{1/2}}\frac{1}{2}\left(\sum_{\substack{p\le x\\
p\equiv 1\pmod q}}\frac{1}{p}\right)^2\ll x\sum_{y<q\le
x^{1/2}}\left(\frac{\log\log x}{\phi(q)}\right)^2\\
&& \ll x(\log\log x)^2 \sum_{y<q\le x^{1/2}}\frac{1}{q^2} \ll
\frac{x(\log\log x)^2}{y}=\frac{x}{\log\log x}=o(x)
\end{eqnarray*}
as $x\to\infty$. We now let
$$
{\cal Q}=\{p: t_p\le p^{1/3}\},
$$
and let ${\cal A}_2$ be the set of $k\le x$ divisible by some $q\in
{\cal Q}$ with $q>y$. To estimate $\#{\cal A}_2$, we begin by
estimating the counting function $\#{\cal Q}(t)$ of ${\cal Q}$ for
positive real numbers $t$.  Clearly,
$$
2^{\#{\cal Q}(t)}\le \prod_{q\in {\cal Q}(t)} q\le \prod_{s\le
t^{1/3}} (2^s-1)<2^{\sum_{s\le t^{1/3}} s}\le 2^{t^{2/3}},
$$
so
\begin{equation}
\label{eq:Q1} \#{\cal Q}(t)\le t^{2/3}.
\end{equation}
By Abel's summation formula, we now get that
$$
\#{\cal A}_2\le \sum_{\substack{y\le q\le x\\ q\in {\cal
Q}}}\left\lfloor \frac{x}{q}\right\rfloor\le x\sum_{\substack{y\le
q\le x\\ q\in {\cal Q}}}\frac{1}{q}\ll x\int_y^x\frac{d\#{\cal
Q}(t)}{t}\ll \frac{x}{y^{1/3}}=\frac{x}{\log\log x}=o(x)
$$
as $x\to\infty$.

Recall now that $P(m)$ stands for the largest prime factor of the
positive integer $m$. Known results from the theory of distribution
of smooth numbers show that uniformly for $3\le s\le t$, we have
\begin{eqnarray}
\label{eq:Psi} \Psi(t,s)=\#\{m\le t:P(m)\le s\}\ll t\exp(-u/2),
\end{eqnarray}
where $u=\log t/\log s$ (see \cite[Section III.4]{Ten}). Thus,
putting
$$
z=\exp\left(32(\log\log\log x)^2\right),
$$
we conclude that the estimate
\begin{equation}
\label{eq:psi} \Psi(t,y)\ll \frac{t}{(\log\log x)^{5}}
\end{equation}
holds uniformly for large $x$ once $t>z$, because in this case
$
u=\frac{\log t}{\log y}\ge \frac{32}{3}\log\log\log x,
$
therefore
$$
\frac{u}{2}\ge \frac{16}{3}\log\log\log x,
$$
so, in particular, $u/2>5\log\log\log x$ holds for all large $x$.
Furthermore, if $t>Z=\exp((\log\log x)^2)$, then
$$
u=\frac{\log t}{\log y}=\frac{(\log\log x)^2}{3\log\log\log x},
$$
so $u/2>2\log\log x$ one $x$ is sufficiently large. Thus, in this
range, inequality \eqref{eq:psi} can be improved to
\begin{equation}
\label{eq:psi1} \Psi(t,y)\ll \frac{x}{\exp(2\log\log x)}\ll
\frac{x}{(\log x)^2}.
\end{equation}
Now for a positive integer $m$, we put $d(m,y)$ for the largest
divisor $d$ of $m$ which is $y$-smooth, that is, $P(d)\le y$.
Let ${\cal A}_3$ be the set of $k\le x$ having a prime factor $p$
exceeding $z^{10}$ such that $d(p-1,y)>p^{1/10}$. To estimate
$\#{\cal A}_3$, we fix a $y$-smooth number $d$ and a prime $p$ with
$z^{10}<p<d^{10}$ such that $p\equiv 1\pmod d$, and observe that the
number of $n\le x$ which are multiples of this prime $p$ is $\le
\lfloor x/p\rfloor$. Note also that $d>p^{1/10}>z$. Summing up over
all the possibilities for $d$ and $p$, we get that $\#{\cal A}_3$
does not exceed
\begin{eqnarray*}
&& \sum_{\substack{z<d\\ P(d)\le y}}\sum_{\substack{p\le x\\ p\equiv
1\pmod d}}\left\lfloor \frac{x}{p}\right\rfloor\le
x\sum_{\substack{z<d\\ P(d)\le y}}\sum_{\substack{p\le x\\ p\equiv
1\pmod d}}\frac{1}{p} \ll x\sum_{\substack{z<d\\ P(d)\le
y}}\frac{\log\log x}{\phi(d)}\\
&& \ll x(\log\log x)^2\sum_{\substack{z<d\\ P(d)\le
y}}\frac{1}{d}\ll x(\log\log x)^2\int_{z}^x\frac{d\Psi(t,y)}{t}\\
&& \ll x(\log\log
x)^2\left(\frac{\Psi(t,y)}{t}\Big|_{z}^{x}+\int_z^x
\frac{\Psi(t,y)}{t^2} dt\right)\\
&& \ll \frac{x}{(\log\log x)^3}+ x(\log\log
x)^2\int_z^x\frac{\Psi(t,y)dt}{t^2}.
\end{eqnarray*}
In the above estimates, we used aside from the Abel summation
formula and inequality \eqref{eq:psi}, also the minimal order of the
Euler function $\phi(d)/d\gg 1/\log\log x$ valid for all $d\in
[1,x]$. It remains to bound the above integral. For this, we split
it at $Z$ and use estimates \eqref{eq:psi} and \eqref{eq:psi1}. In
the smaller range, we have that
$$
\int_z^{Z}\frac{\Psi(t,y)dt}{t^2}\ll \frac{1}{(\log\log
x)^5}\int_z^Z\frac{dt}{t}\ll \frac{\log Z}{(\log\log x)^5}\ll
\frac{1}{(\log\log x)^3}.
$$
In the larger range, we use estimate \eqref{eq:psi1} and get
$$
\int_Z^x\frac{\Psi(t,y)dt}{t^2}\ll \frac{1}{(\log x)^2}\int_Z^x
\frac{dt}{t}\ll \frac{1}{\log x}.
$$
Putting these together we get that
$$
\#{\cal A}_3\ll \frac{x}{(\log\log x)^3}+x(\log\log
x)^2\left(\frac{1}{(\log\log x)^3}+\frac{1}{\log x}\right)=o(x)
$$
as $x\to\infty$.

\medskip

Now let $\ell=d(k,z^{10})$. Put
$$
w=\exp(1920(\log\log\log x)^3),
$$
and put ${\cal A}_4$ for the set of $k\le x$ such that $\ell>w$.
Note that each such $k$ has a divisor $d>w$ such that $P(d)\le
z^{10}$. Since for such $d$ we have
$$
\frac{\log d}{\log (z^{10})}=6\log\log\log x,
$$
we get that in the range $t\ge w$,
$
u/2>3\log\log\log x,
$
for large $x$, so
\begin{equation}
\label{eq:psinew} \Psi(t,z^{10})<\frac{t}{(\log\log x)^3}
\end{equation}
uniformly for such $t$ once $x$ is large. Furthermore, if
$$
t>Z_1=\exp(1280\log\log x(\log\log\log x)^2),
$$
then
$
u=\frac{\log t}{\log z^{10}}>4\log\log x
$
therefore $u/2>2\log\log x$. In particular,
\begin{equation}
\label{eq:psi1new} \Psi(t,z^{10})\ll \frac{x}{(\log x)^2}
\end{equation}
in this range. By an argument already used previously, we have that
$\#{\cal A}_4$ is at most
\begin{eqnarray*}
&& \le \sum_{\substack{w<d<x\\ P(d)\le z^{10}}}\left\lfloor
\frac{x}{d}\right\rfloor\le x\sum_{\substack{w<d<x\\ P(d)\le
z^{10}}}\frac{1}{d}  \ll x\int_w^x \frac{d\Psi(t,z^{10})}{t}\\
&& \ll x
\left(\frac{\Psi(t,z^{10})}{t}\Big|_{t=w}^{t=x}+\int_w^x\frac{\Psi(t,z^{10})dt}{t^2}\right)
\\
&&\ll x\left(\frac{1}{(\log\log
x)^3}+\int_w^{Z_1}\frac{\Psi(t,z^{10})dt}{t^2}+\int_{Z_1}^x
\frac{\Psi(t,z^{10})dt}{t^2}\right)\\
&& \ll x\left(\frac{1}{(\log\log x)^3}+\frac{\log Z_1}{(\log\log
x)^3}+\frac{\log x}{(\log x)^2}\right)=o(x)
\end{eqnarray*}
as $x\to\infty$, where the above integral was estimated by splitting
it at $Z_1$ and using estimates \eqref{eq:psinew} and
\eqref{eq:psi1new} for the lower and upper ranges respectively.

\medskip

Let ${\cal A}_5$ be the set of $k\le x$ which are coprime to all
primes $p\in [y,z^{10}]$. By the Brun method,
$$
\#{\cal A}_5\ll x\prod_{y\le q\le z}\left(1-\frac{1}{q}\right)\ll
\frac{x\log y}{\log z}\ll \frac{x}{\log\log\log x} =o(x)
$$
as $x\to\infty$.

\medskip

We next let ${\cal A}_6$ be the set of $k\le x$ such that
$P(k)<w^{100}$. Clearly,
$$
\#{\cal A}_6=\Psi(x,w^{100})=x\exp\left(-c_1\frac{\log
x}{(\log\log\log x)^3}\right)=o(x)
$$
as $x\to\infty$, where $c_1=1/384000$.

\medskip

Finally, we let
$$
{\cal A}_7=\{k\le x: dp\mid k~{\text{\rm for~some}}~p\equiv 1\pmod
d~{\text{\rm and}}~p<d^3\}.
$$
Assume that $k\in {\cal A}_7$. Then there is a prime factor $p$ of
$k$ and a divisor $d$ of $p-1$ of size $d>p^{1/3}$ such that $dp\mid
k$. Fixing $d$ and $p$, the number of such $n\le x$ is $\le \lfloor
x/(dp)\rfloor$. Thus,
\begin{eqnarray*}
\#{\cal A}_7 & \le & \sum_{y\le p\le x}\sum_{\substack{d\mid p-1\\
d>p^{1/3}}}\left\lfloor \frac{x}{dp}\right\rfloor\le x\sum_{y\le
p\le
x}\frac{1}{p}\sum_{\substack{d\mid p-1\\ d>p^{1/3}}}\frac{1}{d}\\
& \ll & \sum_{y\le p\le x}\frac{1}{p}
\left(\frac{\tau(p-1)}{p^{1/3}}\right)\ll x\sum_{y\le p\le
x}\frac{\tau(p-1)}{p^{1+1/3}}\ll x\sum_{y\le p\le
x}\frac{1}{p^{5/4}}\\
& \ll & x\int_y^x \frac{dt}{t^{5/4}}\ll
\frac{x}{y^{1/4}}=\frac{x}{(\log\log x)^{3/4}}=o(x)
\end{eqnarray*}
as $x\to\infty$. Here, we used $\tau(m)$ for the number of divisors
of the positive integer $m$ and the fact that
$\tau(m)\ll_{\varepsilon} m^{\varepsilon}$ holds for all
$\varepsilon>0$ (with the choice of $\varepsilon=1/12$).

\medskip

{From} now on, $k\le x$ is odd and not in $\bigcup_{1\le i\le
7}{\cal A}_i$. From what we have seen above, most odd integers below
$x$ have this property. Then $\ell\le w$ because $k\not\in {\cal
A}_4$. Further, $k/\ell$ is square-free because $k\not\in {\cal
A}_1$. Moreover, if $p\mid k/\ell$, then $p>z^{10}>y$, therefore
$t_p>p^{1/3}$ because $k\not\in {\cal A}_2$. Since $k\not\in {\cal
A}_3$, we get that $d(p-1,y)<p^{1/10}$, so
$t_p'=t_p/\gcd(t_p,d(p-1,y))>p^{1/3-1/10}>p^{1/5}$ for all such $p$.
Moreover, $t_p'$ is divisible only by primes $>z>y$, so if $p_1$ and
$p_2$ are distinct primes dividing $k/\ell$, then $t_{p_1}'$ and
$t_{p_2}'$ are coprime because $k\not\in {\cal A}_1$. Finally,
$\ell>y$ because $k\not\in {\cal A}_5$. Furthermore, for large $x$
we have that $w>y$, so $k>\ell$ and in fact $k/\ell$ is divisible by
a prime $>w^{100}$ because $k\not\in {\cal A}_6$.

\medskip

We next put $n={\text{\rm lcm}}[d(\phi(k),y),\phi(\ell)]$. We let
$n_0$ stand for the minimal positive integer such that $n_0\equiv
-k+1\pmod {\phi(\ell)}$ and let $m=n_0+\ell\phi(\ell)$. Note that
$$
m\le 2\ell\phi(\ell)\le 2w^2=2\exp(3840(\log\log\log x)^3).
$$
We may also assume that $k>x/\log x$ since there are only at most
$x/\log x=o(x)$ positive integers $k$ failing this property. Since
$k>x/\log x$, we get that
$$
m<2\exp(3840(\log\log\log x)^3)<\lfloor \exp(4000(\log\log\log
k)^3)\rfloor=m(k)
$$
holds for large $x$. We will now show that this value for $m$ works.
First of all $m+k=n_0+\ell\phi(\ell)+k\equiv 1\pmod {\phi(\ell)}$ so
\begin{equation*}
\begin{split}
2^{m+k}-1 & \equiv 1\equiv
2^{\phi(\ell)-1}+2^{\phi(\ell)-2}+\cdots+2^{\phi(\ell)-(n_0-1)}+2^{\phi(\ell)-(n_0-1)}
+\\
& + 2^{x_1n}+\cdots+2^{x_tn}\pmod \ell,
\end{split}
\end{equation*}
where $t=\ell\phi(\ell)$ and $x_1,\ldots,x_t$ are any nonnegative
integers. Let
$$
U=2^{m+k}-1-2^{\phi(\ell)-1}-\cdots
-2^{\phi(\ell)-(n_0-1)}-2^{\phi(\ell)-(n_0-1)}.
$$
Then
$$
U\equiv \sum_{i=1}^t 2^{x_i\phi(\ell)}\pmod \ell
$$
for any choice of the integers $x_1,\ldots,x_t$. Let $p$ be any
prime divisor of $k/\ell$. Clearly,
$
\gcd(t_p,n)=d(t_p,y),
$
because $t_p\mid \phi(k)$ and $n\not\in {\cal A}_1$. In particular,
$$
t_p'=\frac{t_p}{\gcd(t_p,n)}\ge
\frac{t_p}{\gcd(d(\phi(k),y),p-1)}\ge p^{1/3-1/10}>p^{1/5}.
$$
Let $X=\{2^{jn}\pmod p\}$. Certainly, the order of $2^{n}$ modulo
$p$ is precisely $t_p'$. So, $\#X=t_p'>p^{1/5}$. A recent result of
Bourgain, Glibichuk and Konyagin (see Theorem 5 in \cite{BGK}),
shows that there exists a constant $T$ which is absolute such that
for all integers $\lambda$, the equation
$$
\lambda\equiv 2^{x_1n}+\cdots +2^{x_tn} \pmod p
$$
has an integer solutions $0\le x_1,\ldots,x_t<t_p'$ once $t>T$. In
fact, for large $p$ the number of such solutions
$$
N(t,p,\lambda)=\#\{(x_1,\ldots,x_t):0\le x_1,\ldots,x_t\le t_p\}$$
satisfies
$$
N(t,p,\lambda)\in \left[\frac{\#X^{t}}{2p},\frac{2\#X^t}{p}\right]
$$
independently in the parameter $\lambda$ and uniformly in the number
$t$. In particular, if we let $N_1(t,p,\lambda)$ be the number of
such solutions with $x_i=x_j$ for some $i\ne j$, then
$N_1(t,p,\lambda)\ll t^2\#X^{t-1}/p$. Indeed, the pair $(i,j)$ with
$i\ne j$ can be chosen in $O(t^2)$ ways, and the common value of
$x_i=x_j$ can be chosen in $\#X$ ways. Once these two data are
chosen, then the number of ways of choosing $x_s\in
\{0,1,\ldots,t_p'-1\}$ with $s\in \{1,2,\ldots,t\}\backslash\{
i,j\}$ such that
$$
\lambda-2^{x_in}-2^{x_jn}\equiv \sum_{\substack{1\le s\le t\\ s\ne
i,j}} 2^{x_sn}\pmod p
$$
is $N(t-2,p,\lambda-2^{x_in}-2^{x_jn})\ll \#X^{t-2}/p$ for $t>T+2$.
In conclusion, if all solutions $x_1,\ldots,x_t$ have two components
equal, then $p^{1/5}\ll \#X\ll t^2$, so $p\ll t^{10}$. For us, $t\le
2w^2$, so $p\ll  w^{20}$. Since $P(k)=P(k/\ell)>w^{100}$, it follows
that at least for the largest prime $p=P(k)$, we may assume that
$x_1,\ldots,x_t$ are all distinct modulo $p$ for a suitable value of
$\lambda$.

\medskip

We apply the above result with $\lambda=U$, $t=\ell\phi(\ell)$ (note
that since $t>y$, it follows that $t>T+2$ does indeed hold for large
values of $x$), and write ${\bf x}(p)=(x_1(p),\ldots,x_t(p))$ for a
solution of
$$
U\equiv 2^{x_1(p)n}+\cdots+2^{x_t(p)n}\pmod p,\qquad 0\le x_1(p)\le
\ldots\le x_t(p)<t_p'.
$$
We also assume that for at least one prime (namely the largest one)
the $x_i(p)$'s are distinct. Now choose integers $x_1,\ldots,x_t$
such that
$$
x_i\equiv x_i(p)\pmod {t_p'}
$$
for all $p\mid k/\ell$. This is possible by the Chinese Remainder
Lemma since the numbers $t_p'$ are coprime as $p$ varies over the
distinct prime factors of $k/\ell$. We assume that for each $i$,
$x_i$ is the minimal nonnegative integer in the corresponding
arithmetic progression modulo $\prod_{p\mid k/\ell} t_p'$. Further,
since $nx_i(p)$ are distinct modulo $t_p'$ when $p=P(k)$, it follows
that $nx_i$ are also distinct for $i=1,\ldots,t$. Hence, for such
$x_i$'s we have that
$
U-\sum_{i=1}^t 2^{x_in}
$
is a multiple of all $p\mid k/\ell$, and since $k/\ell$ is
square-free, we get that
$
U\equiv \sum_{i=1}^t 2^{x_in}\pmod {k/\ell}.
$
But the above congruence is also valid modulo $\ell$, so it is valid
modulo $k={\text{\rm lcm}}[\ell,k/\ell]$, since $\ell$ and $k/\ell$
are coprime. Thus,
$$
U\equiv \sum_{i=1}^t 2^{x_in}\pmod k,
$$
or
$$
2^{k+m-1}-1\equiv
2^{\phi(\ell)-1}+\cdots+2^{\phi(\ell)-(n_0-1)}+2^{\phi(\ell)-(n_0-1)}+\sum_{i=1}^t
2^{x_i n}\pmod k.
$$
As we have said, the numbers $x_in$ are distinct and they can be
chosen of sizes at most $n{\text{\rm lcm}}[t_p':p\mid k\ell]\le
\phi(k)\le k$. Finally, $nx_i$ are divisible by $\phi(\ell)$ whereas
none of the numbers $\phi(\ell)-j$ for $j=1,\ldots,n_0-1$ is unless
$n_0=1$. Thus, assuming that $n_0\ne 1$, we get that all the
$m=t+n_0$ exponents are distinct except for the fact that
$\phi(\ell)-(n_0-1)$ appears twice. Let us first justify that
$n_0\ne 1$. Recalling the definition of $n_0$, we get that if this
were so then $\phi(\ell)\mid k$. However, we have just said that
$\ell$ has a prime factor $p>y$. If $\phi(\ell)\mid k$, then $k$ is
divisible by both $p$ and $p-1$ for some $p>y$ and this is
impossible since $n\not\in {\cal A}_1$. Finally, to deal with the
repetition of the exponent $\phi(\ell)-(n_0-1)$, we replace this by
$\phi(\ell)-(n_0-1)+t_k$, where as usual $t_k$ is the order of $2$
modulo $n$. We show that with this replacement, all the exponents
are distinct. Indeed, this replacement will not change the value of
$2^{\phi(\ell)-(n_0-1)+t_k}\pmod k$. Assume that after this
replacement, $\phi(\ell)-(n_0-1)+t_k$ is still one of the remaining
exponents. If it has become a multiple of $n$, it follows that it is
in particular divisible by $t_p$ for all primes $p\mid \ell$. Since
$t_p\mid t_k$ and $t_p\mid \phi(\ell)$ for all primes $p\mid \ell$,
we get that $t_p\mid n_0-1$, so $t_p\mid k$. Since $\ell$ is
divisible by some prime $p>y$ (because $k\not\in {\cal A}_5$), we
get that $t_p\mid k$. Since $k\not\in {\cal A}_2$, we get that
$t_p>p^{1/3}$. Thus, $k$ is divisible by a prime $p>y$ and a divisor
$d$ of $p-1$ with $d>p^{1/3}$, and this is false since $n\not\in
{\cal A}_7$. Hence, this is impossible, so it must be the case that
$\phi(\ell)-(n_0-1)+t_k\in
\{\phi(\ell)-1,\ldots,\phi(\ell)-(n_0-1)\}$. This shows that $t_k\le
n_0\le \ell\phi(\ell)\le 2^{10}w^{20}$. However, $t_k$ is a multiple
of $t_{P(k)}\ge P(k)^{1/3}$, showing that $P(k)\le 2^{30} w^{60}$,
which is false for large $x$ since $k\not\in {\cal A}_6$. Thus, the
new exponents are all distinct for our values of $k$. As far as
their sizes go, note that since $k$ has at least two odd prime
factors, it follows that $t_k\mid \phi(k)/2$, therefore
$\phi(\ell)-(n_0-1)+t_k\le w+\phi(k)/2<w+k/2<k$ since $k>2w$ for
large $x$. Thus, we have obtained a representation of $2^{k+m}-1$
modulo $m$ of the form
$$
2^{j_1}+\cdots+2^{j_m}\pmod k
$$
where $0\le j_1<\ldots<j_m\le k$, which shows that $k\in {\cal
C}_m$. Since $m\le m(k)$ and ${\cal C}_m\subset {\cal C}_{m(k)}$,
the conclusion follows.
\end{proof}

\begin{rem}
\label{rem9} The above proof shows that in fact the number of
odd $k<x$ such that $k\not \in {\cal C}_{m(k)}$ is $O(x/\log\log\log
x)$.
\end{rem}

\begin{corollary}
\label{cor:X} For large $x$, the inequality $c_k/2^k<2^{m(k)-(\log
k)/(\log 2)}$ holds for all odd $k<x$ with at most $O(x/\log\log\log
x)$ exceptions.
\end{corollary}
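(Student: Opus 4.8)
The plan is to derive this formally from Theorem~\ref{thm:2} (in the quantitative form of Remark~\ref{rem9}) together with the elementary upper bound on $a_k$ that membership in a class $\mathcal{C}_m$ provides. The first step is to record the implication: for odd $k$,
\[
k\in\mathcal{C}_m \quad\Longrightarrow\quad a_k<2^{k+m}.
\]
This is just the construction underlying Theorems~\ref{thethree} and~\ref{c1}: by the definition of $\mathcal{C}_m$ there are exponents $0\le j_1<j_2<\cdots<j_m\le m+k-2$ with $2^{k+m}-1\equiv 2^{j_1}+\cdots+2^{j_m}\pmod k$, so $N=2^{k+m}-1-(2^{j_1}+\cdots+2^{j_m})$ is a positive multiple of $k$ (it is positive because a sum of $m$ distinct powers from $\{1,\dots,2^{k+m-2}\}$ is smaller than $2^{k+m}-1$), and since $2^{k+m}-1$ has exactly $k+m$ binary ones while we delete $m$ of them, $s(N)=k$. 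By minimality of $a_k$ we conclude $a_k\le N<2^{k+m}$, hence $c_k/2^k=a_k/(k\,2^k)<2^m/k$.

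The second step is bookkeeping. Apply the above with $m=m(k)=\lfloor\exp(4000(\log\log\log k)^3)\rfloor$; since $\mathcal{C}_{m'}\subseteq\mathcal{C}_{m'+1}$, it suffices that $k\in\mathcal{C}_{m(k)}$. For odd $k\ge 3$ one has $\log k=\ln k$, so $k=2^{(\log k)/(\log 2)}$ and therefore $2^{m(k)}/k=2^{\,m(k)-(\log k)/(\log 2)}$. Thus every odd $k\ge 3$ lying in $\mathcal{C}_{m(k)}$ satisfies precisely the asserted inequality $c_k/2^k<2^{\,m(k)-(\log k)/(\log 2)}$.

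The final step is counting the exceptions: by Remark~\ref{rem9} the number of odd $k<x$ with $k\notin\mathcal{C}_{m(k)}$ is $O(x/\log\log\log x)$, and together with the single value $k=1$ these are the only odd $k<x$ for which the displayed bound can fail. This proves the corollary. I do not expect a genuine obstacle here, since all the analytic heavy lifting is already contained in Theorem~\ref{thm:2}; the only points needing care are checking that the candidate $N$ above is positive with $s(N)=k$, and the elementary identity $2^{(\log k)/(\log 2)}=k$ valid for $k\ge 3$.
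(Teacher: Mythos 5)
Your proposal is correct and follows essentially the same route as the paper: the paper's proof simply cites the bound $a_k\le 2^{k+m}$ for $k\in\mathcal{C}_m$ (via Theorem~\ref{cm}) together with Theorem~\ref{thm:2} and Remark~\ref{rem9}, whereas you re-derive that bound directly from the definition of $\mathcal{C}_m$ by exhibiting the multiple $N=2^{k+m}-1-(2^{j_1}+\cdots+2^{j_m})$ with $s(N)=k$, which is exactly the mechanism underlying Theorems~\ref{c1} and~\ref{cm}. The bookkeeping with $m=m(k)$, the identity $2^{m(k)}/k=2^{m(k)-(\log k)/(\log 2)}$, and the exception count from Remark~\ref{rem9} all match the paper's argument.
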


\begin{proof} This follows from the fact that $c_k=a_k/k\le
2^{k+m}/k$, where $k\in {\cal C}_m$ (see Theorem \ref{cm}), together
with above Theorem \ref{thm:2} and Remark~\ref{rem9}.
\end{proof}

\begin{corollary}
\label{cor:1} The estimate
$$
\frac{1}{x}\sum_{\substack{1\le k\le x\\
k~{\text{\rm odd}}}}\frac{c_k}{2^k}=O\left(\frac{1}{\log\log\log
x}\right)
$$
holds for all $x$.
\end{corollary}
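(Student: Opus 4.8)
The plan is to split the sum over odd $k\le x$ into the terms indexed by the exceptional set of Corollary~\ref{cor:X} and the remaining ``generic'' terms, and to estimate the two pieces by entirely different means: a crude universal bound on the exceptional terms and the strong bound of Corollary~\ref{cor:X} on the rest. The first ingredient is the universal estimate $c_k/2^k<2$, valid for every $k\ge 1$: by Corollary~\ref{cor4} we have $a_k\le 2^{k+n_k}-2^{\mu_2(k)}<2^{k+n_k}$, so $c_k/2^k<2^{n_k}/k<2$ since $2^{n_k}=2^{\lceil\log_2 k\rceil}<2k$. In particular, for $x$ below the threshold beyond which Corollary~\ref{cor:X} applies, $\frac1x\sum_{1\le k\le x}c_k/2^k=O(1)=O(1/\log\log\log x)$ with a suitable implied constant, so from now on $x$ may be assumed large.

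Let $\mathcal{E}(x)$ be the set of odd $k\le x$ for which the inequality of Corollary~\ref{cor:X} fails. By that corollary together with Remark~\ref{rem9} we have $\#\mathcal{E}(x)\ll x/\log\log\log x$, hence $\sum_{k\in\mathcal{E}(x)}c_k/2^k<2\,\#\mathcal{E}(x)\ll x/\log\log\log x$. For every odd $k\le x$ with $k\notin\mathcal{E}(x)$, Corollary~\ref{cor:X} gives $c_k/2^k<2^{m(k)-(\log k)/(\log 2)}=2^{m(k)}/k$, and since $m(\cdot)$ is non-decreasing we have $m(k)\le m(x)$; therefore
\[
\sum_{\substack{1\le k\le x,\ k~{\text{\rm odd}}\\ k\notin\mathcal{E}(x)}}\frac{c_k}{2^k}\;<\;2^{m(x)}\sum_{k\le x}\frac1k\;\ll\;2^{m(x)}\log x .
\]

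The remaining step, and the only place where an actual (but routine) computation is needed, is to check that $2^{m(x)}\log x=o(x/\log\log\log x)$, in fact that $2^{m(x)}\log x=x^{o(1)}$. Writing $u=\log x$ and $v=\log\log x=\log u$, we have $m(x)\le\exp\bigl(4000(\log v)^3\bigr)$, and since $(\log v)^3=o(v)$ this yields $m(x)=u^{o(1)}=(\log x)^{o(1)}$, whence $2^{m(x)}=x^{o(1)}$ and $2^{m(x)}\log x=x^{o(1)}$. Combining this with the bound for the exceptional terms gives $\sum_{1\le k\le x,\ k~{\text{\rm odd}}}c_k/2^k\ll x/\log\log\log x$, and dividing by $x$ proves the corollary.

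I do not anticipate a genuine obstacle: the statement is essentially a repackaging of Corollary~\ref{cor4}, Corollary~\ref{cor:X} and Remark~\ref{rem9}. The one mildly delicate point is the growth comparison $\exp\bigl(4000(\log\log\log x)^3\bigr)=(\log x)^{o(1)}$, which has to hold with enough room that $2^{m(x)}\log x$ is absorbed into $o(x/\log\log\log x)$; this is a standard iterated-logarithm estimate, and the large constant $4000$ in the definition of $m(k)$ is harmless precisely because the triple logarithm dominates it.
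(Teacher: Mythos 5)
Your proof is correct and follows essentially the same route as the paper: split off the exceptional set of Corollary~\ref{cor:X}/Remark~\ref{rem9}, bound those terms trivially, and use the inequality $c_k/2^k<2^{m(k)-(\log k)/(\log 2)}$ together with the fact that $\exp(4000(\log\log\log x)^3)$ is negligible against $\log x$ on the rest. The only differences are cosmetic: you use a two-way split with the summed bound $2^{m(x)}\log x=x^{o(1)}$ (and justify the trivial bound $c_k/2^k<2$ via Corollary~\ref{cor4}), whereas the paper uses a three-way split, separating $k\le x/\log x$ so that the pointwise bound $2^{m(k)-(\log k)/(\log 2)}<1/\log\log\log x$ applies uniformly to the remaining $k$.
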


\begin{proof}
If $k\le x/\log x$ is odd, then $c_k/2^k\le 1$, so
$$
\sum_{\substack{k\le x/\log x\\ k~{\text{\rm
odd}}}}\frac{c_k}{2^k}\le \frac{x}{\log x}.
$$
If $k\in [x/\log x,x]$ but $k\not\in {\cal C}_{m(k)}$, then still
$c_k/2^k\le 1$ and, by the Corollary \ref{cor:X}, the number of such
$k$'s is $O(x/\log\log\log x)$. Thus,
$$
\sum_{\substack{k\in [x/\log x,x]\\ k\not\in {\cal C}_{m(k)}\\
k~{\text{\rm odd}}}}\frac{c_k}{2^k}\ll \frac{x}{\log\log\log x}.
$$
For the remaining odd values of $k\le x$, we have that
$$
\frac{c_k}{2^k}\le 2^{m(k)-(\log k)/(\log 2)},
$$
so it suffices to show that
$$
2^{m(k)-(\log k)/(\log 2)}<\frac{1}{\log\log\log x},
$$
is equivalent to
$$
(\log k)/(\log 2)-m(k)>\log\log\log\log x/\log 2,
$$
which in turn is implied by
$$
\log(x/\log x)-\log\log\log\log x>(\log 2)\exp(4000(\log\log\log
x)^3),
$$
and this is certainly true for large $x$. Thus, indeed,
$$
\sum_{\substack{1\le k\le x\\
k~{\text{\rm odd}}}}\frac{c_k}{2^k}=O\left(\frac{x}{\log\log\log
x}\right),
$$
which is what we wanted to prove.
\end{proof}

In particular,

\begin{equation}\label{averageconjecture}
\frac{1}{x}\sum_{\substack{k\le x\\
k~{\text{\rm odd}}}}\frac{c_k}{2^k}=o(1)
\end{equation}
as $x\to\infty$. One can adapt these techniques to obtain that the
whole sequence $c_k/2^k$ is convergent to $0$ in arithmetic average.
In order to do so, the sets ${\cal C}_m$ should be suitably modified
and an analog of Theorem \ref{thm:2} for these new sets should be
proved. We leave this for a subsequent work.

\section{Existence and bounds for $a_k$ in base $q> 2$}
\label{sec:7}

Let $q\ge 2$ be a fixed integer and let $x$ be a positive real
number. Put
 \begin{eqnarray*}
 V_k(x)&=&\{0\le n<x: s_q(n)=k\},\\
 V_k(x;h,m)&=&\{0\le n<x: s_q(n)=k, n\equiv h\pmod m\}.
 \end{eqnarray*}
Mauduit and S\'ark\"ozy proved in \cite{MS} that if
 $\gcd(m,q(q-1))=1$, then there exists some constant $c_0$ depending on $q$
 such that if we put
$$
\ell=\min\left\{k,(q-1)\lfloor \log x/\log q\rfloor-k\right\},
$$
then $V_k(x)$ is well distributed in residues classes modulo $m$
provided that $m<\exp(c_0 \ell^{1/2})$.

Taking $m=k$ and $h=0$, we deduce that if $k<\exp(c_0 \ell^{1/2})$,
then
$$
V_k(x;0,k)=(1+o(1))V_k(x)/k
$$
as $x\to\infty$ uniformly in our range for $k$. The condition on $k$
is equivalent to $\log k\ll \ell^{1/2}$, which is implied by
$k+O((\log k)^2)\ll \log x$. Thus, we have the following result.

\begin{lemma}
\label{lem1} Let $q\ge 2$ be fixed. There exists a constant $c_1$
such that if $k$ is any positive integer with $\gcd(k,q(q-1))=1$,
then $V_k(x)$ is well distributed in arithmetic progressions of
modulus $k$ whenever $x>\exp(c_1 k)$.
\end{lemma}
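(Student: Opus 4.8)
\medskip

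The plan is to derive Lemma~\ref{lem1} essentially by unwinding the Mauduit–S\'ark\"ozy equidistribution result quoted above, and then choosing the free parameter $x$ large enough (exponentially in $k$) that the equidistribution hypothesis $m<\exp(c_0\ell^{1/2})$ is satisfied with $m=k$. First I would fix $q$ and the absolute constant $c_0=c_0(q)$ furnished by \cite{MS}. Given an integer $k$ coprime to $q(q-1)$, I would take $x>\exp(c_1 k)$ for a constant $c_1$ to be pinned down, and set $L=\lfloor \log x/\log q\rfloor$, so that $(q-1)L$ is (up to rounding) the largest possible digit sum of an integer below $x$. The quantity to control is $\ell=\min\{k,(q-1)L-k\}$; to invoke the theorem we need $k<\exp(c_0\ell^{1/2})$, equivalently $\log k<c_0\ell^{1/2}$, i.e. $\ell>(\log k/c_0)^2$.

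\medskip

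The key point is that both halves of the min are comfortably large once $x$ is exponential in $k$. On the one hand $\ell\le k$, and if $k$ is bounded we are in a trivial finite range; so assume $k$ is large, whence $k>(\log k/c_0)^2$ automatically. On the other hand $\ell\le (q-1)L-k$, and here is where the hypothesis $x>\exp(c_1k)$ enters: if $c_1$ is chosen so that $c_1>2\log q/(q-1)$, say $c_1 = 3\log q/(q-1)$, then $L\ge \log x/\log q - 1> c_1 k/\log q - 1 = 3k/(q-1)-1$, so $(q-1)L-k> 2k - (q-1) > k$ for large $k$. Hence for large $k$ we get $\ell=k$ and in particular $\ell>(\log k/c_0)^2$, so the Mauduit–S\'ark\"ozy hypothesis $m<\exp(c_0\ell^{1/2})$ holds with $m=k$ (since $k<\exp(c_0 k^{1/2})$ for large $k$). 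Applying their theorem with $m=k$, $h=0$ — legitimate because $\gcd(k,q(q-1))=1$ — gives
$$
V_k(x;0,k)=(1+o(1))\,\frac{V_k(x)}{k}
$$
as $x\to\infty$, uniformly for $k$ in this range, which is precisely the asserted well-distribution in arithmetic progressions of modulus $k$. The finitely many small values of $k$ (those below the threshold where the above asymptotics kick in) can be absorbed by enlarging $c_1$, since for each such fixed $k$ one only needs $x$ large enough, and there are finitely many of them.

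\medskip

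The one genuinely delicate point is making the uniformity honest: the $o(1)$ in the Mauduit–S\'ark\"ozy statement is as $x\to\infty$, and I must be sure that taking $x>\exp(c_1 k)$ keeps $k$ inside the admissible range $m<\exp(c_0\ell^{1/2})$ for \emph{all} larger $x$ as well, not just at the threshold. This is fine because increasing $x$ (with $k$ fixed) only increases $L$, hence increases $(q-1)L-k$, hence does not decrease $\ell=\min\{k,(q-1)L-k\}$; so once the hypothesis holds it continues to hold. Thus the real content is entirely in the cited theorem, and the lemma is the clean special case $m=k$, $h=0$ with the parameter $x$ pushed out to exponential size in $k$; I expect no obstacle beyond bookkeeping the constants $c_0\mapsto c_1$.
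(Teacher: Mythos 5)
Your proposal is correct and takes essentially the same route as the paper: the paper gives no separate proof, but obtains the lemma exactly as you do, by specializing the Mauduit--S\'ark\"ozy equidistribution theorem to $m=k$, $h=0$ and observing that $x>\exp(c_1k)$ makes $\ell=\min\{k,(q-1)\lfloor\log x/\log q\rfloor-k\}$ of size $k$, so the hypothesis $k<\exp(c_0\ell^{1/2})$ is met. Your constant-tracking (choosing $c_1=3\log q/(q-1)$ and noting monotonicity in $x$) is just a more explicit version of the paper's one-line remark that the condition is implied by $k+O((\log k)^2)\ll\log x$.
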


Corollary 2 of \cite{MS} implies that if
\begin{equation}
\label{cond-delta} \Delta=\left|\frac{q-1}{2\log q}\log
x-k\right|=o(\log x)\qquad {\text{\rm as}}~x\to\infty,
\end{equation}
then the estimate
\[
\#V_k(x)= \frac{x}{(\log x)^{1/2}}\exp\left(-c_3\frac{\Delta^2}{\log
x}+ O\left(\frac{\Delta^3}{(\log x)^2}+\frac{1}{(\log
x)^{1/2}}\right) \right)
\]
holds with some explicit constant $c_3$ depending on $q$. As a
corollary of this result, we deduce the following result.
\begin{lemma}
\label{lem2} If condition \eqref{cond-delta} is satisfied, then
$V_k(x)\not= \emptyset$.
\end{lemma}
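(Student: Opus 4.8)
The plan is to read off the conclusion directly from the asymptotic formula for $\#V_k(x)$ quoted just above, which is exactly Corollary~2 of \cite{MS} specialized under hypothesis \eqref{cond-delta}. Since a set of positive cardinality is nonempty, all that has to be verified is that the displayed expression for $\#V_k(x)$ is positive --- in fact, unbounded --- once $x$ is large.

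Concretely, I would take logarithms in
\[
\#V_k(x)=\frac{x}{(\log x)^{1/2}}\exp\left(-c_3\frac{\Delta^2}{\log x}+O\left(\frac{\Delta^3}{(\log x)^2}+\frac{1}{(\log x)^{1/2}}\right)\right),
\]
to get
\[
\log\#V_k(x)=\log x-\tfrac12\log\log x-c_3\frac{\Delta^2}{\log x}+O\left(\frac{\Delta^3}{(\log x)^2}+\frac{1}{(\log x)^{1/2}}\right).
\]
Then I would invoke the hypothesis $\Delta=o(\log x)$, i.e. $\Delta/\log x\to 0$. It follows that $\Delta^2/(\log x)^2=(\Delta/\log x)^2\to 0$ and $\Delta^3/(\log x)^3=(\Delta/\log x)^3\to 0$, hence both $c_3\Delta^2/\log x$ and $\Delta^3/(\log x)^2$ are $o(\log x)$; the remaining error term $(\log x)^{-1/2}$ is trivially $o(\log x)$. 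Therefore
\[
\log\#V_k(x)=\log x+o(\log x)=(1+o(1))\log x\longrightarrow\infty,
\]
so $\#V_k(x)\ge 1$ for all sufficiently large $x$, which is precisely the statement $V_k(x)\neq\emptyset$.

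There is no real obstacle here: the lemma is a soft corollary of the Mauduit--S\'ark\"ozy count, and the one point that makes it go through is that the positive main term $\log x$ in $\log\#V_k(x)$ strictly dominates the negative Gaussian-type contribution $c_3\Delta^2/\log x$ exactly because $\Delta$ has smaller order than $\log x$. If desired, the same computation yields the sharper statement $\#V_k(x)=x^{1+o(1)}$; but for the intended application --- guaranteeing the existence of $a_k$ in base $q$ by choosing $x$ with $k$ close to $\frac{q-1}{2\log q}\log x$ --- mere nonemptiness suffices, so I would stop once $\#V_k(x)\ge 1$ is established.
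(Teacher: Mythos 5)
Your proposal is correct and matches the paper's argument: the paper likewise deduces the lemma directly from the quoted Mauduit--S\'ark\"ozy estimate, since under \eqref{cond-delta} that formula exhibits $\#V_k(x)$ as a strictly positive quantity (indeed $x^{1+o(1)}$), forcing $V_k(x)\neq\emptyset$. Your logarithmic bookkeeping just makes explicit the domination of $\log x$ over $c_3\Delta^2/\log x$ that the paper leaves implicit.
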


In case $k$ and $q$ are coprime but $k$ and $q-1$ are not, we may
apply instead Theorem B of \cite{MPS} with $m=k$ and $h=0$ to arrive
at a similar result.

\begin{lemma}
\label{lemB} Assume that $q\ge 2$ is fixed. There exists a constant
$c_4$ depending only on $q$ such that if $k$ is a positive integer
with $\gcd(k,q)=1$, and $x\ge \exp(c_4 k)$, then $V_k(x;0,k)\ne 0$.
\end{lemma}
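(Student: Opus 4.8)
The plan is to mimic the proof structure suggested by Lemmas~\ref{lem1} and \ref{lem2}, but to invoke Theorem~B of \cite{MPS} in place of the results of \cite{MS} used there. The point is that Theorem~B of \cite{MPS} establishes a well-distribution estimate for $V_k(x;h,m)$ under the weaker hypothesis $\gcd(m,q)=1$ (rather than $\gcd(m,q(q-1))=1$), at the cost of a possibly different admissible range for $m$ in terms of $x$ and the relevant quantity $\ell$.

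First I would recall the precise statement of Theorem~B of \cite{MPS}: for $\gcd(m,q)=1$ it gives $V_k(x;h,m) = (1+o(1)) V_k(x)/m$ as $x\to\infty$, uniformly for $m$ in a range of the shape $m < \exp(c\,\ell^{\alpha})$ for suitable absolute $\alpha>0$ and a constant $c$ depending only on $q$, where $\ell=\min\{k,(q-1)\lfloor \log x/\log q\rfloor - k\}$ as before. Then I would specialize to $m=k$ and $h=0$. As in the derivation of Lemma~\ref{lem1}, the condition $k < \exp(c\,\ell^{\alpha})$ is, after taking logarithms, a condition of the form $\log k \ll \ell^{\alpha}$, hence is implied by $k + O\big((\log k)^{1/\alpha}\big) \ll \log x$. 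In particular there is a constant $c_4$, depending only on $q$, such that the hypothesis is met whenever $x \ge \exp(c_4 k)$. For such $x$ we then have $V_k(x;0,k) = (1+o(1)) V_k(x)/k$.

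It remains to check that $V_k(x) \ne \emptyset$ for $x$ of this size, so that the main term is genuinely positive and the asymptotic forces $V_k(x;0,k)\ne\emptyset$. For this I would either quote the nonvanishing half of Theorem~B of \cite{MPS} directly, or observe that $x=\exp(c_4 k)$ with $c_4$ large puts us (after possibly enlarging $c_4$) in a regime where $\#V_k(x)$ is large: indeed for $x \ge \exp(c_4 k)$ the value $k$ is well below the ``center'' $(q-1)\lfloor \log x/\log q\rfloor /2$ of the distribution of the digit sum, so standard binomial-type lower bounds for the number of integers below $x$ with exactly $k$ nonzero base-$q$ digits (choosing which $k$ of the roughly $\log x/\log q$ digit positions are nonzero, and which nonzero value each takes) show $\#V_k(x) \gg \binom{\lfloor \log x/\log q\rfloor}{k}(q-1)^k \to\infty$. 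Combined with the well-distribution estimate, this yields $V_k(x;0,k)\ne\emptyset$, i.e.\ there is a multiple of $k$ below $\exp(c_4 k)$ with base-$q$ digit sum equal to $k$.

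The main obstacle is simply making sure the admissible range in Theorem~B of \cite{MPS} is strong enough to accommodate $m=k$ with $x$ only exponentially large in $k$; concretely, one needs the exponent $\alpha$ in the range $m<\exp(c\ell^\alpha)$ to satisfy $\alpha$ such that $\ell^\alpha$ dominates $\log k$ when $\ell \asymp k$, which holds for any fixed $\alpha>0$ once $x\ge\exp(c_4 k)$ with $c_4$ chosen appropriately. Everything else is bookkeeping: translating the $o(1)$ into a genuine nonemptiness statement and absorbing the $q$-dependence into the constant $c_4$. No new ideas beyond those already used for Lemmas~\ref{lem1} and~\ref{lem2} are required; this lemma is the $\gcd(k,q-1)>1$ analogue of that argument.
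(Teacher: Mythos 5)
Your proposal follows the paper's own (one-sentence) proof exactly: the paper handles this case by applying Theorem~B of \cite{MPS} with $m=k$ and $h=0$, in parallel with the derivation of Lemma~\ref{lem1} from \cite{MS}, and your additional bookkeeping about the admissible range and nonemptiness is just a fleshed-out version of that. One small correction: an integer with $k$ nonzero base-$q$ digits of arbitrary nonzero values does not have digit sum $k$, so your lower bound should drop the factor $(q-1)^k$ and read $\#V_k(x)\ge\binom{\lfloor \log x/\log q\rfloor}{k}$ (all chosen digits equal to $1$), which still tends to infinity and suffices for the nonemptiness step.
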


One can even remove the coprimality condition on $q$ and $k$. Assume
that $x$ is sufficiently large such that
\begin{equation}
\label{eq:Delta1} \Delta\le c_5(\log x)^{5/8},
\end{equation}
where $c_5$ is some suitable constant depending on $q$. Using
Theorem~C and Lemma 5 of \cite{MPS} with $m=k$ and $h=0$, we obtain
the following result.

\begin{lemma}
\label{lemC} Assume that both estimates \eqref{eq:Delta1} and
$k<2^{(\log x)^{1/4}}$ hold. Then $V_k(x;0,k)\ne \emptyset$.
\end{lemma}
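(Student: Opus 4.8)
The plan is to invoke Theorem~C of \cite{MPS} with modulus $m=k$ and residue $h=0$; this is exactly the result counting integers below $x$ with a prescribed base-$q$ digit sum in a prescribed residue class, carrying \emph{no} coprimality hypothesis on the modulus, and the two assumptions \eqref{eq:Delta1} and $k<2^{(\log x)^{1/4}}$ are precisely what make its error term negligible. Note first that \eqref{eq:Delta1} forces condition \eqref{cond-delta}, so Corollary~2 of \cite{MPS} (the source of Lemma~\ref{lem2}) applies; since $\Delta\le c_5(\log x)^{5/8}$ gives $\Delta^2/\log x=O((\log x)^{1/4})$ and $\Delta^3/(\log x)^2=O((\log x)^{-1/8})$, it yields $\#V_k(x)\ge x\exp(-O((\log x)^{1/4}))$. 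The remaining task is to show that $V_k(x)$ actually meets the class $0\pmod k$.

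I would first perform the reduction to the coprime case, which is what Lemma~5 of \cite{MPS} supplies inside the proof of Theorem~C. Factor $k=k_1k_2$, where $k_1$ is the largest divisor of $k$ composed of primes dividing $q$ and $\gcd(k_2,q)=1$. Since each prime dividing $q$ is at least $2$, we have $k_1\le k<2^{(\log x)^{1/4}}$, so there is an exponent $a=O(\log k)=O((\log x)^{1/4})$ with $k_1\mid q^a$, whence $q^a=\exp(O((\log x)^{1/4}))$. Looking only at integers of the form $n=q^an_1$ with $n_1<x':=x/q^a$, we get $s_q(n)=s_q(n_1)$, while $k\mid n$ holds if and only if $k_2\mid n_1$ (because $k_1\mid q^a$ and $\gcd(k_1,k_2)=1$). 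Thus it suffices to prove $V_k(x';0,k_2)\neq\emptyset$; and since $\log x'=\log x-a\log q=\log x+O((\log x)^{1/4})$, both hypotheses persist, with slightly adjusted constants, when $x$ is replaced by $x'$, and $\#V_k(x')\ge x'\exp(-O((\log x')^{1/4}))$ still holds. (If $k_2=1$ this is already Lemma~\ref{lem2}.)

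Now $\gcd(k_2,q)=1$, so $\#V_k(x';0,k_2)$ is controlled by an estimate of the type of Theorem~B of \cite{MPS}, of the shape $\#V_k(x';0,k_2)=\dfrac{\varrho}{k_2}\,\#V_k(x')\,(1+o(1))+E$, where $\varrho=\varrho(k_2,q,k)>0$ is the density coming from the $(q-1)$-part of $k_2$ --- positive because the congruence defining it is solvable, for instance by taking all the freed digits equal to $0$ --- and where, in the range $k_2\le k<2^{(\log x)^{1/4}}$ allowed here, the error $E$ is of order strictly smaller than $x'\exp(-O((\log x')^{1/4}))$. Since the main term is then $\gg x'\exp(-O((\log x')^{1/4}))/k_2$ and $k_2<2^{(\log x')^{1/4}}$ for large $x$, the main term dominates $E$; hence $\#V_k(x';0,k_2)>0$ and the lemma follows.

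The real obstacle is this last comparison: pinning down the precise shape of the error $E$ in Theorem~C and verifying that it is beaten by $\#V_k(x')/k_2$. That is exactly where the two hypotheses are used --- \eqref{eq:Delta1} to keep $\#V_k(x')$ near its maximal size $x'\exp(-O((\log x')^{1/4}))$, and $k<2^{(\log x)^{1/4}}$ to keep the modulus $k_2$ small enough that dividing by it does not push the count below $E$. The rest is bookkeeping: tracking the harmless replacement of $x$ by $x/q^a$ and checking the positivity of $\varrho$.
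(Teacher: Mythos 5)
Your proposal follows essentially the same route as the paper: the paper's proof of Lemma~\ref{lemC} is simply an application of Theorem~C and Lemma~5 of \cite{MPS} with $m=k$ and $h=0$ under the hypotheses \eqref{eq:Delta1} and $k<2^{(\log x)^{1/4}}$, and your sketch (the factorization $k=k_1k_2$, the passage to $n=q^a n_1$, and the Theorem~B-type count modulo $k_2$) is just an unpacking of what those cited results do. The error-term comparison you defer to ``bookkeeping'' is exactly the content supplied by the cited theorems, so nothing essential is missing relative to the paper's own argument.
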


A sufficient condition on $x$ for Lemma \ref{lemC} above to hold is
that $x>\exp(c_6 k)$, where $c_6$ is a constant is a constant that
depends on $q$. Putting Lemmas~\ref{lemB} and \ref{lemC} together we
obtain the next theorem.
\begin{thm}
\label{thm_q} For all $q\ge 2$ there exists a constant $c_6$
depending on $q$ such that for all $k\ge 1$ there exists $n\le
\exp(c_6 k)$ with $s_q(kn)=k$.
\end{thm}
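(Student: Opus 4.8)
\textbf{Proof plan for Theorem~\ref{thm_q}.}
The plan is to treat the cases $\gcd(k,q)=1$ and $\gcd(k,q)>1$ separately, using Lemma~\ref{lemB} in the first case and Lemma~\ref{lemC} in the second, and to absorb the finitely many small values of $k$ into the constant by invoking the elementary existence result of Section~\ref{sec:2}. Throughout, note that a statement ``$V_k(x;0,k)\neq\emptyset$'' produces a multiple $kn<x$ of $k$ with $s_q(kn)=k$, and hence an $n<x/k<x$ as required.

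First suppose $\gcd(k,q)=1$. Then Lemma~\ref{lemB} applies with $x=\exp(c_4k)$, so $V_k(x;0,k)\neq\emptyset$, which yields $n<\exp(c_4k)$ with $s_q(kn)=k$. This disposes of every $k$ (large or small) coprime to $q$, with the constant $c_4$.

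Next suppose $\gcd(k,q)>1$. The idea is to pick $x$ with $\log x$ of size $\Theta_q(k)$ so that both hypotheses of Lemma~\ref{lemC} hold, namely \eqref{eq:Delta1}, that is $\Delta\le c_5(\log x)^{5/8}$ with $\Delta=|(q-1)(\log x)/(2\log q)-k|$, together with $k<2^{(\log x)^{1/4}}$. The delicate point is that $\Delta$ is small only when $\log x$ is close to $2k\log q/(q-1)$, so it does not suffice merely to take $\log x$ somewhere below $c_6k$: the value of $x$ must be pinned down. I would therefore take $x=q^{\,j}$ with $j=\lceil 2k/(q-1)\rceil$, so that $\Delta\le (q-1)/2$ is bounded and \eqref{eq:Delta1} holds for all $k$ beyond some $k_1(q)$. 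For the same $x$ one has $\log x\ge 2k\log q/(q-1)$, so $2^{(\log x)^{1/4}}\ge 2^{(2k\log q/(q-1))^{1/4}}$; since this lower bound grows like $2^{c_qk^{1/4}}$ it eventually exceeds $k$, so $k<2^{(\log x)^{1/4}}$ holds for all $k$ beyond some $k_2(q)$. For $k\ge\max\{k_1(q),k_2(q)\}$, Lemma~\ref{lemC} then supplies a multiple $kn<x\le q\exp\!\big(2k\log q/(q-1)\big)$ with $s_q(kn)=k$, hence $n<\exp(c_6'k)$ for any constant $c_6'=c_6'(q)>2\log q/(q-1)$.

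Finally, only finitely many $k$ remain, namely those with $\gcd(k,q)>1$ and $k<\max\{k_1(q),k_2(q)\}$. For each of these, the construction in Section~\ref{sec:2} already exhibits a multiple of $k$ whose base-$q$ digit sum is $k$, of size $\exp(O(k^2))$; since there are finitely many such $k$, the corresponding $n$ are bounded by a constant depending only on $q$, which is absorbed by enlarging the constant. Taking $c_6=c_6(q)$ to be the maximum of $c_4$, $c_6'$, and the constant needed for the finitely many exceptional $k$ completes the proof. I expect the main obstacle to be the second case: one must check that a \emph{single} choice $x$ with $\log x=\Theta_q(k)$ makes $\Delta$ small while keeping $k<2^{(\log x)^{1/4}}$, and it is precisely this tension that forces $c_6$ to be essentially $2\log q/(q-1)$ and that produces the finite exceptional set handled via Section~\ref{sec:2}.
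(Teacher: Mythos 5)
Your proposal is correct and follows essentially the same route as the paper, which also splits according to whether $\gcd(k,q)=1$ (Lemma~\ref{lemB}) or not (Lemma~\ref{lemC}) and combines the two lemmas with $\log x\asymp k$. Your extra care in pinning $x$ down as $q^{\lceil 2k/(q-1)\rceil}$ so that $\Delta$ stays bounded, and in absorbing the finitely many small exceptional $k$ via the Section~\ref{sec:2} construction, merely makes explicit what the paper's remark ``a sufficient condition on $x$ \ldots is that $x>\exp(c_6k)$'' leaves implicit.
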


Consequently, $a_k=\exp(O(k))$ for all $k$, and in particular it is
nonzero. The following example of Lemma \ref{lem6} shows that
$a_k=\exp(o(k))$ does not always hold as $k\to\infty$.
 \begin{lem}
\label{lem6} If $q>2$, then
\begin{equation*}
a_{q^m}=q^m \left(2 q^{\frac{q^m-1}{q-1}}-1  \right).
\end{equation*}
If $q=2$, then  $a_{2^m}=2^m(2^{2^m}-1)$.
\end{lem}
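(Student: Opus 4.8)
The plan is to reduce the statement to an elementary fact about the smallest integer having a prescribed base-$q$ digit sum. Since $a_{q^m}$ is by definition a positive multiple of $q^m$, and every positive multiple of $q^m$ has the shape $q^m n$ with $n\ge 1$ — whose base-$q$ expansion is that of $n$ with $m$ trailing zeros appended, so that $s_q(q^m n)=s_q(n)$ — we get immediately that $a_{q^m}=q^m n_0$, where $n_0$ is the least positive integer with $s_q(n_0)=q^m$. Thus the entire content of the lemma is the determination of $n_0$.

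So it suffices to establish the following sublemma: for an integer $s\ge 1$, write $s=a(q-1)+b$ with $0\le b\le q-2$; then the smallest positive integer with base-$q$ digit sum $s$ equals $(b+1)q^a-1$, i.e.\ the $a$-digit number all of whose digits equal $q-1$ when $b=0$, and the number with leading digit $b$ followed by $a$ digits equal to $q-1$ when $b\ge 1$. I would prove this by a digit count: a $d$-digit base-$q$ number has digit sum at most $d(q-1)$, which forces $d\ge a$ when $b=0$ and $d\ge a+1$ when $b\ge 1$; then among numbers with exactly that many digits the value is minimized by taking all digits equal to $q-1$ (when $b=0$), respectively leading digit $b$ and all remaining digits equal to $q-1$ (when $b\ge 1$), producing exactly $(b+1)q^a-1$, while any number with strictly more digits exceeds it because $b+2\le q$. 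The one point requiring care is the separate treatment of the case $b=0$ and the comparison across different lengths.

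Applying the sublemma with $s=q^m$ finishes the proof. From $q^m-1=(q-1)\cdot\frac{q^m-1}{q-1}$ we get $q^m=(q-1)N+1$ with $N=\frac{q^m-1}{q-1}\in\N$. If $q\ge 3$ this is exactly the decomposition $s=a(q-1)+b$ with $a=N$ and $b=1$ (legitimate since $1\le q-2$), so $n_0=2q^{N}-1$ and hence $a_{q^m}=q^m\bigl(2q^{(q^m-1)/(q-1)}-1\bigr)$. If $q=2$ then $q-1=1$ forces $b=0$, $a=2^m$, so $n_0=2^{2^m}-1$ and $a_{2^m}=2^m(2^{2^m}-1)$.

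The main — essentially the only — obstacle is proving the sublemma cleanly, with correct bookkeeping for the edge case $b=0$ and the length comparisons; everything after that is substitution. It is worth noting afterwards that for fixed $q\ge 3$ one has $N=\frac{q^m-1}{q-1}\asymp k/(q-1)$ with $k=q^m$, so $\log a_k\gg k$, which is precisely why this family certifies that $a_k=\exp(o(k))$ fails in general and complements Theorem~\ref{thm_q}.
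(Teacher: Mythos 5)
Your proof is correct and follows essentially the same route as the paper: the paper also reduces the problem to finding the least integer with digit sum $q^m$ (the factor $q^m$ merely appending trailing zeros to handle divisibility) and identifies it greedily as the block of maximal digits $q^{\alpha_m}-1$ with $\alpha_m=\frac{q^m-1}{q-1}$ augmented by the least admissible extra power $q^{\alpha_m}$, giving $2q^{\alpha_m}-1$. Your general sublemma on the minimal integer with digit sum $s=a(q-1)+b$ is just a cleaner, more explicit packaging of that same greedy argument, with the added benefit that the $q=2$ case (where $b=0$ forces the all-ones number $2^{2^m}-1$) falls out uniformly rather than being treated as a separate observation.
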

\begin{proof}
The fact that $s_q(a_{q^m})=q^m$ for all $q\ge 2$ is immediate. We
now show the minimality of the given $a_{q^m}$ with this property.
Let $\alpha_m=(q^m-1)/(q-1)$. Note that every digit of
$q^{\alpha_m}-1$ in base $q$ is maximal, so $q^{\alpha_m}-1$ is
minimal such that $s_q(q^{\alpha_m}-1)=q^m-1$. Since
\[
q^{\alpha_m}-1=(q-1) q^{\alpha_m-1}+(q-1) q^{\alpha_m-2}+\cdots
+(q-1),
\]
then  $a_{q^m}$ must contain the least term $q^t$, where
$t>\alpha_m-1$ such that its sum of digits is $q^m$ and
$q^m|a_{q^m}$. The least term  is obviously $q^{\alpha_m}$, and it
just happens that  $a_{q^m}$ such defined satisfies the mentioned
conditions.
\end{proof}

\end{document}